\numberwithin{equation}{section}
\newtheorem{Theorem}{Theorem}[section]
\newtheorem*{Theorem*}{Theorem}
\newtheorem{Lemma}[Theorem]{Lemma}
\theoremstyle{definition}
\newtheorem{Example}[Theorem]{Example}
\newtheorem{Remark}[Theorem]{Remark} }
\begin{document}

\allowdisplaybreaks

\newcommand{\arXivNumber}{2404.15767}

\renewcommand{\PaperNumber}{075}

\FirstPageHeading

\ShortArticleName{Isomonodromy and Painlev\'e Type Equations, Case Studies}

\ArticleName{Isomonodromy and Painlev\'e Type Equations,\\ Case Studies}

\Author{Marius VAN DER PUT and Jaap TOP}

\AuthorNameForHeading{M.~van~der~Put and J.~Top}

\Address{Bernoulli Institute, Nijenborgh 9, 9747 AG~Groningen, The Netherlands}
\Email{\href{mailto:m.van.der.put@rug.nl}{m.van.der.put@rug.nl}, \href{mailto:j.top@rug.nl}{j.top@rug.nl}}
\URLaddress{\url{http://www.math.rug.nl/~top/}}

\ArticleDates{Received July 02, 2024, in final form August 28, 2025; Published online September 11, 2025}

\Abstract{There is an abundance of equations of Painlev\'e type besides the classical Painlev\'e equations. Classifications have been computed by the Japanese school. Here we consider Painlev\'e type equations induced by isomonodromic families of linear ODE's having at most~${z=0}$ and $z=\infty$ as singularities. Requiring that the formal data at the singularities produce isomonodromic families parametrized by a single variable $t$ leads to a small list of hierarchies of cases. The study of these cases involves Stokes matrices
and moduli for linear ODE's on the projective line. Case studies reveal interesting families of linear ODE's and Painlev\'e type equations. However, rather often the complexity (especially of the Lax pair) is too high for either the computations or for the output. Apart from classical Painlev\'e equations one rediscovers work of Harnad, Noumi and Yamada. A hierarchy, probably new, related to the classical $P_3(D_8)$, is discovered. Finally, an amusing ``companion'' of $P_1$ is presented.}

\Keywords{moduli space for linear connections; irregular singularities; Stokes matrices; monodromy spaces; isomonodromic deformations; Painlev\'e equations; Lax pairs; Hamiltonians}

\Classification{33E17; 14D20; 14D22; 34M55}

 \section{Introduction and background}\label{section0}

\subsection{Introduction}\label{FlNe}
Every classical Painlev\'e equation can be obtained by an isomonodromy of a family $\mathcal{M}$ of linear differential equations over the differential field $\mathbb{C}(z)$ \big(or connections on $\mathbb{P}^1$\big). The monodromy space $\mathcal{R}$, associated to $\mathcal{M}$, consists of all possibilities for the ordinary monodromy and the Stokes data of the solutions for the linear differential equations
belonging to $\mathcal{M}$.

An isomonodromic subfamily of $\mathcal{M}$ is defined by ``the monodromy is constant'' condition, in other words, it is
a fibre of the Riemann--Hilbert map ${\rm RH}\colon \mathcal{M}\rightarrow \mathcal{R}$ which sends a connection in $\mathcal{M}$
to its monodromy data in $\mathcal{R}$.
An isomonodromic subfamily turns out to be given by first-order, nonlinear, differential equations
(called here the Painlev\'e vector field) which can be transformed into a single higher-order nonlinear differential equation.

For example (see also \cite[Section~4.9]{vdP-Sa}),
\[
\frac{{\rm d}^2q}{{\rm d}t}=\frac{\bigl(\frac{{\rm d}q}{{\rm d}t}\bigr)^2}{2q}+4q^2+2tq-\frac{\theta_0^2}{2q},
\]
 which is the Flaschka--Newell
form $P_{2,fn}$ of the second Painlev\'e equation, is obtained from the family \[z\frac{{\rm d}}{{\rm d}z}+ \left(\begin{matrix} p & \frac{p^2-\theta_0^2-2qtz+q^2z+qz^2}{q}\\ z-q & -p \end{matrix}\right). \]
This 4-dimensional family of operators is defined by the conditions:
the trace is zero, the singularities are $z=0$, which is regular singular, and $z=\infty$ which is irregular singular and
has (generalized) eigenvalues $\pm \bigl(z^{3/2}+tz^{1/2}\bigr)$.

 For this example, the monodromy space $\mathcal{R}$ consists of the ordinary monodromy $\operatorname{mon}_0$ at $z=0$ and Stokes
data at $z=\infty$. It can be shown that it consists of Stokes matrices
$\bigl(\begin{smallmatrix} 1 & 0\\a_1 &1\end{smallmatrix}\bigr), \bigl(\begin{smallmatrix} 1 & a_2\\0 &1\end{smallmatrix}\bigr), \bigl(\begin{smallmatrix} 1 & 0\\a_3 &1\end{smallmatrix}\bigr)$ for directions $\{d_i\}$ with $1>d_1>d_2>d_3>0$.
There is one relation, the monodromy identity:
$\operatorname{mon}_0$ is conjugated to
\smash{$\bigl(\begin{smallmatrix} 0 & -1\\1 &0\end{smallmatrix}\bigr)\bigl(\begin{smallmatrix} 1 & 0\\a_1 &1\end{smallmatrix}\bigr)\bigl(\begin{smallmatrix} 1 & a_2\\0 &1\end{smallmatrix}\bigr)\bigl(\begin{smallmatrix} 1 &0\\a_3 &1\end{smallmatrix}\bigr)$},
where $\bigl(\begin{smallmatrix} 0 & -1\\1 &0\end{smallmatrix}\bigr)$ is the formal monodromy. Hence~${\mathcal{R}=\mathbb{C}^3}$.
It can be shown that the fibres of the Riemann--Hilbert map ${\rm RH}\colon\mathcal{M}\rightarrow \mathcal{R}$ are parametrized
by $t$. A fibre has the form
\[
z\frac{{\rm d}}{{\rm d}z}+ \left(\begin{matrix} p & \frac{p^2-\theta_0^2-2qtz+q^2z+qz^2}{q}\\ z-q & -p \end{matrix}\right)
\]
with now $p$, $q$ functions of $t$ and $\theta_0$ a parameter (independent of $t$).

The functions $p$, $q$ of $t$ cannot directly be computed by the Riemann--Hilbert map, since the latter
 is (in general) highly transcendental and not computable. However, the condition ``isomonodromic'' is equivalent to
the above operator commuting with an operator of the form~${\frac{{\rm d}}{{\rm d}t}+B(z,t)}$, where the $2\times 2$ matrix
$B(z,t)$ depends analytically on $t$ and rationally on~$z$. This is the Lax pair condition and leads to the nonlinear
differential equations
\[
\frac{{\rm d}q}{{\rm d}t}=2p , \qquad \frac{{\rm d}p}{{\rm d}t}=2q^2+tq+\frac{p^2-\theta_0^2/4}{q}
\]
 (the Painlev\'e vector field)
and as consequence to $P_{2,fn}$. Finally, the system of equations for~$q$,~$p$ is Hamiltonian with function
\[
H=\frac{-p^2+\theta_0^2/4}{q}+q^2+tq.
\]

The parameter space $\mathcal{P}$ for $\mathcal{R}$ consists of the data for the local formal monodromy at $z=0$ and $z=\infty$. The parameter space $\mathcal{P}^+$ for $\mathcal{M}$ consists of the data of the residue matrices of the connection at $z=0$ and $z=\infty$. There is an obvious exponential map $\mathcal{P}^+\rightarrow \mathcal{P}$
commuting with the Riemann--Hilbert map ${\rm RH}\colon \mathcal{M}\rightarrow \mathcal{R}$.

In the above example, $\mathcal{P}^+$ is given by $\theta_0$ and $\mathcal{P}$ is given by ${\rm e}^{2\pi {\rm i} \theta_0}$.

 Apart from the classical cases $P_1-P_6$, there is an abundance of families $\mathcal{M}$ of linear differential equations over $\mathbb{C}(z)$ producing Painlev\'e type equations. The Japanese school has an extensive literature on equations of Painlev\'e type and also developed classifications.

 We apologize for citing only a few items from the extensive
literature, \cite{F-I-K-N,H-K-N-S,J-M-U,K3,K2,K4,K-N-S,Mi,N-Y,O-O,O1}. In the literature, other methods than isomonodromy,
 e.g., middle convolution and constructions with Hamiltonians, are used for producing Painlev\'e type equations.

Here, we {\it modestly restrict ourselves} to classifying and studying rather special cases of~$\mathcal{M}$, namely assuming that at most $z=0$ and $z=\infty$ are singular and assuming that the fibres of the Riemann--Hilbert map
${\rm RH}\colon\mathcal{M}\rightarrow \mathcal{R}$ are parametrized by one variable~$t$, called the {\it time variable}.
In general there are more times variables.

 One reason for this restriction is that the quantum differential equations (see \cite{C-D-G,G-G-I,G2,G1}), associated to algebraic varieties have two singularities $z=0$ (regular singular) and $z=\infty$ (irregular singular). A further reason is that the theory of Stokes matrices,
and the algorithm we describe to make $\mathcal{M}$ explicit in the case of at most two singularities, provide most of the information for $\mathcal{R}$ and $\mathcal{M}$ used here.
Furthermore, we are also interested in {\it hierarchies} of families instead of individual families.

{\it The aim of this paper} is to find these families $\mathcal{M}$ and classify them by their formal singularities at $z=0$
and $z=\infty$ (see the list in Section~\ref{list}).

For each item in this list, we try to make the family of connections $\mathcal{M}$ explicit by computing a~matrix differential operator
$z\frac{{\rm d}}{{\rm d}z}+A$. This can be rather involved. Further we want to make the monodromy space $\mathcal{R}$
explicit, to produce a Lax pair and compute the Painlev\'e type equation (or vector field), produce a Hamiltonian and find
an identification (if this exists) with some known Painlev\'e type equation.

Due to complexity, we obtained for many items in the list of Section~\ref{list}, only partial information. However,
we highlight here:
 {\it A list of the most detailed and interesting explicit cases}.
 The formulas give representatives for the Galois orbit(s) of the eigenvalues and $n$ is the rank of the connection.
 \begin{itemize}\itemsep=0pt
\item Section~\ref{two}: $z^{2/n}+tz^{1/n}$, $n\geq 3$. We rediscover the hierarchy studied by~Noumi and~Yamada~\cite{N-Y}.
The spaces $\mathcal{M}$ and $\mathcal{R}$ and the fibres of $\mathcal{M}\rightarrow \mathcal{R}$ are made explicit.
For any $n\geq 3$, explicit formulas for the Lax pair and the Painlev\'e vector field are computed.
A~Hamiltonian is computed for $n=3,4,6$. The Painlev\'e equations for $n=3$ and $n=4$ are identified with~$P_4$ and $P_5$. An identification of the Hamiltonian for $n=6$ is missing.

\item Section~\ref{5.1}: $z^{1/2}$, $tz^{1/2}$, $n=4$.
Apart from the Hamiltonian all data are made explicit. The fibres of the map $\mathcal{R}\rightarrow \mathcal{P}$ are
affine cubic surfaces. From their equation we expected, in view of the list \cite[pp.~26--27]{vdP-Sa}, that this case is a
 pull back of $P_4$. Instead,
 the computation in \cite[Section~1]{Dz} provides an explicit identification with the sixth Painlev\'e equation in Okamoto's form.

\item Section~\ref{seven}: $z$, $tz$, $(-1-t)z$, $n=3$. Except for the Hamiltonian, all data (including the Stokes matrices) are made explicit. However the expected identification with $P_6$ is not verified.

In fact, we rediscover the family found by
Harnad and studied by Mazzocco, Degano and Guzetti \cite{D-G,H,Maz}. They provide the verification
of the equivalence to~$P_6$.

\item Section~\ref{nine}: $z^2$, $-z^2-tz$, $tz$, $n=3$. The results, including an explicit Hamiltonian, are complete. The
Painlev\'e type equation is a second-order explicit equation and therefore related to one of the classical Painlev\'e
equations. From the cubic equation of the fibres of $\mathcal{R}\rightarrow \mathcal{P}$, we expected a relation
with $P_1$. Instead, the computation in \cite[Section~2]{Dz} provides an explicit identification with the fourth Painlev\'e equation in Okamoto's form.

\item Section~\ref{twelve}: $z^{-1/n}$ and $tz^{1/n}$, $n\geq 2$. This hierarchy is probably new. For $n=2$, this defines the standard family leading to $P_3(D_8)$. For general $n$, the spaces of connections and of monodromy
 are made explicit. The explicit formulas for the Lax pairs and the Painlev\'e vector field have a structure,
similar to the ones for the Noumi--Yamada family in Section~\ref{two}.

 For $n=3$, we claim an identification with an item in the classification
 by Kawakami~\cite[p.~35]{K3}.

\item Section~\ref{thirteen}: $z^{5/2}+tz^{1/2}$, $n=2$.
What we like to call ``a companion of $P_1$'' is given by the family of connections of rank 2 with a regular singularity
at $z=0$ and an irregular singularity at~${z=\infty}$ with generalized eigenvalues $\pm \bigl(z^{5/2}+tz^{1/2}\bigr)$.

We recall that the standard family of connections for $P_1$ has the same definition except for the assumption
``$z=0$ is regular''.

 For this companion of $P_1$, all data are computed, including the Painlev\'e equation which is a nonlinear, explicit,
 fourth-order differential equation.

 The above family is not present in the list of Section~\ref{list}, since it is a subfamily of the {\it family
 with two time variables } $t_1$, $t_2$. This family is given by $z=0$ is regular singular and $z=\infty$ has generalized eigenvalues $\pm \bigl( z^{5/2}+\frac{t_1}{2}z^{3/2}+\frac{t_2}{2}z^{1/2} \bigr)$.

 Also in this case an operator $z\frac{{\rm d}}{{\rm d}z}+A$, representing $\mathcal{M}$, is computed. Further, $\mathcal{R}$ and the fibres $\mathcal{R}\rightarrow \mathcal{P}$ are made explicit. The Lax pair equations have now the form $\bigl[\smash{z\frac{{\rm d}}{{\rm d}z}+A, \frac{{\rm d}}{{\rm d}t_i}}+B_i\bigr]=0$ for $i=1,2$ for certain matrices $B_1$, $B_2$ depending on $z$, $t_1$, $t_2$. The Painlev\'e vector field, solution of the Lax pairs, is computed.
 It seems that this companion of $P_1$ and its extension to a ``two time variables system'' are new.
 \end{itemize}

\subsection{Background}
For the convenience of the reader, we describe terminology and results concerning the
formal classification of differential modules, irregular singularities and Stokes matrices etc.
More details can be found in~\cite{vdP-Si}.

\subsubsection{The formal classification of differential modules}

Differential modules $M$ over the field $\mathbb{C}\bigl( \!\bigl(z^{-1}\bigr)\!\bigr)$ are classified
in terms of tuples $(V, \{V_q\}_q, \gamma)$. A~tuple consists of a complex vector space $V$ of dimension $n$ with additional structure.
The $q$'s denote elements of $ \bigcup_{r\geq 1} z^{1/r}\mathbb{C}\bigl[\!\bigl[z^{1/r}\bigr]\!\bigr]$. For each $q$ there is given a linear
subspace $V_q\subset V$ and~${V=\oplus V_q}$. The {\it eigenvalues } are the finitely many $q_1,\dots ,q_r$ with $V_{q}\neq 0$.
The multiplicity~${m=m(q)}$ is the dimension of $V_q$. The dimension of $V$ is therefore equal to $\sum m(q_j)$.
One writes $(q)_m$ to denote an eigenvalue $q$ with multiplicity $m$.

The {\it ramification index} $e$ is the smallest positive integer such that $q_j\in z^{1/e}\mathbb{C}\bigl[\!\bigl[z^{1/e}\bigr]\!\bigr]$ for all~$j$. The degree of
$q_j$ is the highest (rational) power of $z$ occurring in $q_j$. The {\it Katz invariant} $\kappa$ is the maximum
of the degrees of the $q_j$'s. The Galois group of $\bigcup_{r\geq 1}\mathbb{C}\bigl(\!\bigl( z^{-1/r}\bigr)\!\bigr)/ \mathbb{C}\bigl(\!\bigl(z^{-1}\bigr)\!\bigr)$ has a~topological generator $\sigma$ which acts by $\sigma \bigl(z^\lambda\bigr)={\rm e}^{2\pi {\rm i} \lambda}z^\lambda$ for $\lambda \in \mathbb{Q}$.

Further, $\gamma$, the {\it formal monodromy}, is an automorphism of $V$ and has the property
$\gamma (V_q)=V_{\sigma(q)}$ for all~$q$.

This classification of differential modules $M$ over $\mathbb{C}\bigl(\!\bigl(z^{-1}\bigr)\!\bigr)$
 by the tuples $(V,\{V_q\}_q,\gamma)$ is based upon the fact that the solutions of $M$ \big(where $M$ is represented in the form of an ordinary
 scalar linear differential equation with coefficients in $\mathbb{C}\bigl(\!\bigl(z^{-1}\bigr)\!\bigr)$\big) \color{black} can be written as a sum of
 expressions~\smash{$\exp\bigl(\int q\frac{{\rm d}z}{z}\bigr)\cdot G$} with $q$ as above and $G$ a combination of formal power series
 in roots of $z$ and $\log (z)$. The space $V$ associated to $M$ is the space of these formal or symbolic expressions. It has a natural decomposition
 as $\oplus V_q$. Further, the formal monodromy $\gamma$, is given by $\sigma$ applied to the above expressions.
The functor $M\mapsto (V, \{V_q\}_q, \gamma)$ is an equivalence of Tannakian categories.

{\it Let a family of data for eigenvalues $q_1,\dots, q_r$, multiplicities and formal monodromies $\gamma$ be given}. This gives rise to a family of formal differential operators $z\frac{{\rm d}}{{\rm d}z}+F$.

\begin{Example}[see Section~\ref{six}] Eigenvalues $c_1z$, $c_2z$, $c_3z$ with multiplicities 1 and formal monodromy
\[
\gamma=\begin{pmatrix} b_1 & & \\ & b_2 & \\ & & b_3 \end{pmatrix},
\]
 combine to
the formal differential operator $z\frac{{\rm d}}{{\rm d}z}+F$, where
\[
F=\begin{pmatrix} c_1z+ a_1 & & \\ & c_2z+a_2 & \\ & & c_3z+a_3 \end{pmatrix}
\]
and $b_j={\rm e}^{2 \pi {\rm i} a_j}$ for $j=1,2,3$.
\end{Example}

The space of connections $\mathcal{M}$ is the ``universal family'' (details later), represented by a
family of differential operators $z\frac{{\rm d}}{{\rm d}z}+A$ over $\mathbb{C}(z)$ with singular points $z=0$ and $z=\infty$.
The condition is that this operator is at $z=\infty$ is formally equivalent to $z\frac{{\rm d}}{{\rm d}z}+F$. Moreover, at $z=0$ the operator
should be regular, or regular singular, or formally equivalent to another given formal operator.

\subsubsection[Details on Stokes matrices and construction of the space R]{Details on Stokes matrices and construction of the space $\boldsymbol{\mathcal{R}}$}
Let $M$ be a differential module over $\mathbb{C}\bigl(\bigl\{ z^{-1}\bigr\}\bigr)$, the field of convergent power series at $z=\infty$.
Write $(V, \{V_{q_k}\}, \gamma)$ for its formal classification, i.e., the classification of
\smash{$\widehat{M}=\mathbb{C}\bigl(\!\bigl(z^{-1}\bigr)\!\bigr)\otimes M$}. The formal solutions of \smash{$\widehat{M}$} lift, by multisummation, to solutions of $M$ on sectors
at $z=\infty$. The jumps of these solutions from one sector to another are measured by
Stokes matrices $\{{\rm St}_d\}$ at the singular directions $d$. In fact, $M$ is classified by its formal classification and the
Stokes matrices. Moreover, the formalism, detailed below, produces all possibilities for the Stokes data and
determines therefore the structure of $\mathcal{R}$.

For each difference $q_k-q_l$, $ k\neq l$ of eigenvalues, one considers
a solution $y\neq 0$ of $z\frac{{\rm d}y}{{\rm d}z}=q_k-q_l$. The singular directions $d\in \mathbb{R}$ for $q_k-q_l$
are defined by the condition that $y\bigl({\rm e}^{2\pi {\rm i} d}r\bigr)$ tends to zero for $r\rightarrow +\infty$ with maximal speed (maximal descent).

The Stokes matrix
${\rm St}_d\in {\rm GL}(V)$ for direction $d$ reads $\mathbf{1}_V +\sum _{k,l}m_{k,l}$, where the sum is taken over the pairs such that $d$ is singular for $q_k-q_l$ and $m_{k,l}$ denotes a linear map \[
V\stackrel{\rm projection}{\rightarrow}V_{q_k}\rightarrow V_{q_l}\subset V.\]

 We note that ${\rm St}_d={\bf 1}_V$ if $d$ is not a singular direction and that ${\rm St}_{d+1}=\gamma^{-1} {\rm St}_d \gamma$ holds for~${ d\in \mathbb{R}}$.

 Therefore, the Stokes data can be identified with the space of all Stokes matrices ${\rm St}_d$
with $d\in [0,1)$ and can be identified with a vector space of dimension
\[
N:=\sum _{k\neq l} \deg (q_k-q_l)\cdot \dim V_{q_k}\cdot \dim V_{q_l}.\]

The formal data combined with the data of the Stokes matrices classify the analytic singularity at $z=\infty$. In particular,
$\operatorname{mon}_\infty$, the topological monodromy at $z=\infty$, is equivalent to the product $\gamma \circ {\rm St}_{d_s}\circ \dots \circ {\rm St}_{d_1}$,
where $d_s>\dots >d_1$ are the singular directions in $[0,1)$. This property will be called {\it the monodromy identity}. For the construction of
$\mathcal{R}$, we have to consider various cases.

 (i) {\it $z=0$ is regular singular}. Given are $V=V_{q_1}\oplus \cdots \oplus V_{q_r}$, an action
of $\sigma$ on $\{q_1,\dots , q_r\}$, the singular directions $1>d_s> \cdots >d_1\geq 0$ with the corresponding differences
$q_k-q_l$. We note, in passing, that the highest coefficient of a difference $q_k-q_l$ may depend on $t$. In such a case the
singular directions also depend on $t$. In the cases that we computed, $\mathcal{R}$ itself is independent of~$t$.\looseness=-1

In general, $\mathcal{R}$ is defined as the set of equivalence classes of all possibilities for the Stokes data and the
topological monodromies. In the present case, $\mathcal{R}$ consists of the equivalence classes of all possible tuples
$(\gamma, {\rm St}_{d_s},\dots , {\rm St}_{d_1})\in {\rm SL}(V)^{s+1}$, where, by assumption, $\gamma$ is supposed to have
distinct eigenvalues. We now make $\mathcal{R}$ explicit.

{\it Let $(V,\{V_q\},\gamma ,\{{\rm St}_d\})$ be given}. The action of $\sigma $ on the eigenvalues has orbits (i.e., Galois orbits)
$\overline{Q}_1,\dots ,\overline{Q}_r$ and $\overline{Q}_i=\{q_{i,0},\dots ,q_{i,\ell_i-1}\}$ for all $i$. Let $d_i=\dim V_{q_{i,0}}$.
Put $d=\sum _{i=1}^rd_i $ and as before we write
\[
N=\sum \deg ( q_{i,j}-q_{k,l})\cdot \dim (V_{q_{i,j}})\cdot \dim (V_{q_{k,l}}).\]

\begin{Lemma} \label{lemma01} The monodromy space $\mathcal{R}$ is isomorphic to the quotient of the space $(\mathbb{C}^*)^{d-1}\times \mathbb{C}^N$ by the
 action of a~group isomorphic to $(\mathbb{C}^*)^{d-1}$. This quotient has an open, affine, dense subspace
 isomorphic to $(\mathbb{C}^*)^{d-1}\times \mathbb{C}^{N-d+1}$. In particular, $\dim \mathcal{R}=N$.
\end{Lemma}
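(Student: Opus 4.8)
The plan is to separate the two ingredients of a point of $\mathcal{R}$ — the formal monodromy $\gamma$ and the Stokes matrices — and to use the graded conjugation action to normalize $\gamma$ first, reading off the residual symmetry that then acts on the Stokes part. Recall that $\mathcal{R}$ is the set of equivalence classes of tuples $(\gamma,\mathrm{St}_{d_s},\dots,\mathrm{St}_{d_1})$ under the group $G=\prod_q \mathrm{GL}(V_q)$ of graded automorphisms of $V=\oplus_q V_q$, acting by simultaneous conjugation. The collection of Stokes matrices is unconstrained: for each ordered pair $(q_k,q_l)$ and each of its $\deg(q_k-q_l)$ singular directions in $[0,1)$ one has a free block $m_{k,l}\colon V_{q_k}\to V_{q_l}$, so the Stokes data form an affine space $\mathbb{C}^N$ with $N$ as defined. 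It remains to describe the $\gamma$-part and the effective residual group.

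First I would normalize $\gamma$ one Galois orbit at a time. On $W_i=\oplus_j V_{q_{i,j}}$ the map $\gamma$ is a cyclic block operator sending $V_{q_{i,j}}$ to $V_{q_{i,j+1}}$ through invertible $d_i\times d_i$ blocks $A_0,\dots,A_{\ell_i-1}$. Conjugating by a graded element lets me set $A_0=\cdots=A_{\ell_i-2}=\mathbf{1}$, leaving a single block $C_i$ which is conjugate to the return map $\gamma^{\ell_i}|_{V_{q_{i,0}}}$; the eigenvalues of $\gamma$ on $W_i$ are then the $\ell_i$-th roots of those of $C_i$. The hypothesis that $\gamma$ has distinct eigenvalues forces each $C_i$ to be diagonalizable with distinct eigenvalues, so after a further graded conjugation $C_i$ is diagonal with entries in $(\mathbb{C}^*)^{d_i}$. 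Collecting over the $r$ orbits, $\gamma$ is determined up to the residual symmetry by a point of $(\mathbb{C}^*)^{d}$ with $d=\sum_i d_i$; the condition $\det\gamma=1$ (we work in $\mathrm{SL}(V)$) removes one coordinate, leaving the first factor $(\mathbb{C}^*)^{d-1}$.

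Next I read off the residual group. A graded automorphism commuting with the normalized $\gamma$ must act by one and the same matrix $h$ on every slot of a given orbit, with $h$ commuting with $C_i$; since $C_i$ has distinct eigenvalues this centralizer is the diagonal torus $(\mathbb{C}^*)^{d_i}$. Hence the stabilizer of $\gamma$ in $G$ is $\prod_i(\mathbb{C}^*)^{d_i}=(\mathbb{C}^*)^{d}$, and because the global scalars $\mathbb{C}^*\cdot\mathbf{1}$ act trivially on all of the data, the effective residual group is $(\mathbb{C}^*)^{d}/\mathbb{C}^*\cong(\mathbb{C}^*)^{d-1}$. This torus fixes the eigenvalue factor (it centralizes $\gamma$) and acts diagonally on $\mathbb{C}^N$: the entry of the block $m_{k,l}$, with $q_k\in\overline{Q}_i$ and $q_l\in\overline{Q}_{i'}$, carries the character $e_{(i',b)}-e_{(i,a)}$, the indices $(i,a)$ running over the $d$ coordinates of the torus. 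This already exhibits $\mathcal{R}$ as the quotient $\big((\mathbb{C}^*)^{d-1}\times\mathbb{C}^N\big)/(\mathbb{C}^*)^{d-1}$ claimed in the statement.

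It remains to produce the open affine piece and compute the dimension, and this toric-quotient step is where I expect the real work to lie. The weights are differences of standard characters, so they lie in the root lattice $A_{d-1}$, the character lattice of $(\mathbb{C}^*)^{d-1}$. Because every ordered pair of distinct eigenvalues has $\deg(q_k-q_l)>0$ — no eigenvalue has a constant term — each such pair contributes a block $m_{k,l}$, so the graph on the $d$ torus coordinates whose edges are the occurring weights is connected (different orbits are joined by the full set of cross-blocks, and coordinates of one orbit by within-orbit blocks when $\ell_i\geq2$ or through another orbit otherwise); the edges of any spanning tree then furnish $d-1$ coordinates whose weights form a $\mathbb{Z}$-basis of $A_{d-1}$. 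On the dense open locus where these $d-1$ coordinates are nonzero the torus acts freely and simply transitively on them, and normalizing them to $1$ identifies the quotient of this locus with the affine space $\mathbb{C}^{N-d+1}$ of the remaining coordinates. Multiplying back the untouched eigenvalue factor yields the open, affine, dense subspace $(\mathbb{C}^*)^{d-1}\times\mathbb{C}^{N-d+1}$, whence $\dim\mathcal{R}=(d-1)+(N-d+1)=N$. The main obstacle is precisely this last analysis: checking that the occurring weights span the character lattice (connectivity of the weight graph) and that a genuine affine slice exists, together with the careful scalar/determinant bookkeeping that turns $d$ into $d-1$.
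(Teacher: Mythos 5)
Your proposal follows essentially the same route as the paper's proof: you normalize $\gamma$ orbit by orbit using a basis propagated by $\gamma$ (the paper's choice of $\gamma^i(e_1),\dots,\gamma^i(e_f)$ is exactly your ``set $A_0=\cdots=A_{\ell_i-2}=\mathbf{1}$'' step), identify the possibilities for $\gamma$ with $(\mathbb{C}^*)^{d-1}$ via the eigenvalues of the return maps together with $\det\gamma=1$, compute the residual automorphism group as the centralizing torus modulo scalars, and finish by scaling Stokes entries to $1$. Your toric analysis of the last step (weights in the root lattice, spanning tree, free action on the open locus) is a genuine sharpening of the paper's one-line ``by scaling suitable Stokes data to 1, one obtains this affine, open, dense subspace'', and it also proves the faithfulness statement that the paper merely asserts.

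There is, however, one false assertion inside that analysis: it is not true that every ordered pair of distinct eigenvalues contributes a Stokes block. The number of entries contributed by the pair $(q_k,q_l)$ is the number of singular directions of $q_k-q_l$ lying in $[0,1)$, and when $\deg(q_k-q_l)<1$ (the ramified cases) this number can be $0$, since the singular directions of a difference with leading term $cz^\lambda$ are spaced $1/\lambda$ apart. The paper's own examples show this: in Section~\ref{5.1} ($z^{1/2}$, $tz^{1/2}$, where $d=2$) there are $12$ ordered pairs but only $6$ Stokes matrices, and in Section~\ref{2.1} with $n=5$ there are $20$ ordered pairs but only $8$ Stokes entries. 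Your connectivity conclusion nevertheless survives, but it needs a different argument: the family of ordered pairs between two fixed Galois orbits (or inside a single orbit of length $\geq 2$) is stable under $\sigma$, and since ${\rm St}_{d+1}=\gamma^{-1}{\rm St}_d\gamma$, the full set of singular directions attached to such a $\sigma$-stable family is invariant under translation by $1$; being nonempty, it meets $[0,1)$, so at least one block between (or within) those orbits does occur. That single block is a full $\dim V_{q_l}\times\dim V_{q_k}$ matrix of free coordinates, so it already joins every torus coordinate $(i,a)$ to every coordinate $(i',b)$, which is all your spanning-tree argument requires. With this repair your proof is complete.
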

\begin{proof} There is no restriction on the possibilities for the~\smash{${\rm St}_{d_j}$}. This produces the vector space~$\mathbb{C}^N$.
In order to make the restrictions on $\gamma$ explicit, we consider a Galois orbit, here written as,
$\overline{Q}=\{q_0,\dots ,q_{\ell -1}\}$ with $\dim V_{q_0}=f$. Choose a basis
$e_{1},\dots, e_{f}$ of eigenvectors for the action of $\gamma^\ell$ on $V_{q_0}$. Let $\alpha_1,\dots ,\alpha_f$ denote the
distinct eigenvalues. This basis is unique up to permuting and scaling of the basis vectors. Consider, for $i=1,\dots, \ell -1$, the basis of $V_{q_i}$ to be $\gamma ^i(e_1),\dots , \gamma ^i(e_f)$. One concludes that the data for the matrix of $\gamma$
on the space $\oplus V_{q_i}$ is equivalent to the tuple $(\alpha_1,\dots ,\alpha_f)$. Moreover, the set of all $\ell$th roots of all $\alpha _i$ is the set of eigenvalues of $\gamma$.

Thus the total data for $\gamma$ on $V$ is given by the eigenvalues of $\gamma^{l_i}$ on the space $ V_{q_{i,0}}$ for $i=1,\dots ,r$.
 Since, by assumption, $\gamma$ has determinant 1, the space of possibilities for $\gamma$ is~\smash{$(\mathbb{C}^*) ^{-1+\sum d_i}$}.

The automorphisms of $(V,\{V_q\},\gamma)$ are the $\tau \in {\rm PGL}(V)$ such that $\tau(V_q)=V_q$ for all $q$ and~${\tau \gamma=\gamma \tau}$. One concludes that $\tau$ is determined by its action on all $V_{q_{i,0}}$. Furthermore,
$\tau$~has on this space the same eigenvectors as $\gamma ^{\ell_i}$. This implies that the group of automorphism
is isomorphic to $(\mathbb{C}^*)^{d-1}$. It is seen that the group acts faithfully on the Stokes data. Finally, by scaling
suitable Stokes data to~1, one obtains this affine, open, dense subspace of $\mathcal{R}$.
\end{proof}

 (ii) {\it $z=0$ is regular.} As above in (i), but now with the additional restriction $\gamma \circ {\rm St}_{d_s}\circ \dots \circ {\rm St}_{d_1}=\mathbf{1}_V$. For every candidate, a computation is needed to find out whether $\mathcal{R}$ is not empty and to find its dimension.

 (iii) {\it $z=0$ is irregular singular}. Let $W$ denote the solution space at $z=0$. It has similar additional data as $V$, namely $q$'s,
$\gamma$, ${\rm St}_d$, $\operatorname{mon}_0$. The {\it link} (in \cite{J-M-U} called `connection') is a linear bijection $L\colon W\rightarrow V$ commuting with
the $\operatorname{mon}_*$. The space $\mathcal{R}$ is the space of equivalence classes of the data at $V$, $W$ and the link $L$.

{\it The parameter space $\mathcal{P}$} is defined by data of the topological and the formal monodromies, more precisely
by their characteristic polynomials. By ``fibre'' we will mean a fibre of $\mathcal{R}\rightarrow \mathcal{P}$,
which has the interpretation as space of initial conditions. Each fibre determines a Painlev\'e vector field
(or scalar differential equation) of rank (or order) equal to the dimension of the fibre.

\subsubsection{The rules used for composing our list of families of connections}
The requirements concern the formal data at $z=\infty$ (and also at $z=0$ if this point irregular singular).
\begin{itemize}\itemsep=0pt
\item[R1.] The (distinct) eigenvalues $q_1,\dots , q_r$ with multiplicity $m_1,\dots ,m_r$
 satisfy: all $q_j\neq 0$, $\sum m_j q_j=0$ and (in case $e>1$) invariance under the Galois group of \smash{$\overline{\mathbb{C}(\!(1/z)\!)}$}
over~${\mathbb{C}(\!(1/z)\!)}$.
 Further, one requires that the formal monodromy $\gamma$ is ``generic'', meaning that it has $n$ distinct eigenvalues.

 \item[R2.] If $z=0$ is regular, then the formal data are normalized by the action of the group $\{z\mapsto az+b\}$. If $z=0$ is singular,
 the formal data are normalized using the group~${\{z\mapsto az\}}$. For the description of all formal data at $z=\infty$ and at $z=0$ (if this point is also irregular singular) only one variable $t$ is needed. This is the translation of the requirement that the fibres of $\mathcal{M}\rightarrow \mathcal{R}$ are locally parametrized by a single $t$, called {\it the time variable}.

 \item[R3.] The data should not define a subfamily of a family with more ``time variables''. However, in Section~\ref{thirteen}, we will consider
 a ``companion of $ P_1$'', which is a subfamily of an interesting ``two time variables family''.

 \item[R4.] Apart from individual families, there is interest in hierarchies. By the latter we mean a~sequence of families defined by certain properties of the eigenvalues. For example, $z=0$ regular singular; $e=1$, $\kappa =1$ defines the hierarchy given by the eigenvalues and multiplicities
 $ (z)_{m_1}, (tz)_{m_2}, \bigl( -\frac{m_1+tm_2}{m_3}z\bigr)_{m_3}$ for $m_1,m_2,m_3\geq 1$.
 \end{itemize}

In Section~\ref{list}, a complete list for the cases with multiplicities~1 is presented. This includes of course the classical Painlev\'e equations
with at most two singular points. This list extends in an obvious way to a complete list of hierarchies by allowing multiplicities.

In the next sections, the cases of the list which are not classical, are studied in more detail.

\subsubsection[Details on the definition and construction of the space M]{Details on the definition and construction of the space $\boldsymbol{\mathcal{M}}$}

 (i) \label{(i)} {\it Case $z=0$ is regular singular}. We start by assuming that the irregular singularity $z=\infty$ is unramified and is
given by data $(V, \{V_q\}, \gamma)$.

We choose a basis of $V$, consisting of eigenvectors of $\gamma$, and consider matrices with respect to this basis.
From this, we choose a standard differential operator $z\frac{{\rm d}}{{\rm d}z}+S$ where $S$ is a diagonal matrix with
diagonal entries $( Q_1+a_1,\dots ,Q_n+a_n )$. The $Q_1,\dots ,Q_n$ are the eigenvalues $q_1,\dots ,q_r$, repeated according to their multiplicities. Thus $\sum Q_j=0$. The $a_1,\dots ,a_n \in \mathbb{C}$ satisfy $\sum a_j=0$ and are chosen such that the monodromy of the operator $z\frac{{\rm d}}{{\rm d}z}+\operatorname{diag}(a_1,\dots ,a_n)$ equals $\gamma$.

Now we follow \cite[Section~12]{vdP-Si} and consider the fine moduli space defined by the objects~$(\nabla, \phi)$ on $\mathbb{P}^1$
with the data:
\begin{itemize}\itemsep=0pt
\item[(a)] $\nabla$ is a connection on a trivial vector bundle of rank $n$ on $\mathbb{P}^1$,
\item[(b)] such that $z=0$ is regular singular, and
\item[(c)] $\phi$ is a formal isomorphism of $\nabla$ at $z=\infty$ with $z\frac{{\rm d}}{{\rm d}z}+S$.
\end{itemize}
The explicit choice $\phi$ guarantees that the objects have no automorphisms and that a fine moduli space $\mathcal{U}$
(i.e., a universal family) exists.

 According to \cite[Corollary 12.15 and its proof]{vdP-Si}, the universal family of this fine moduli space is represented by the
family of differential operators $\operatorname{Pr}\bigl(g\bigl(z\frac{{\rm d}}{{\rm d}z}+S\bigr)g^{-1}\bigr)$ where $g$ runs in the $N$-dimensional affine space
\[ \bigg\{{\bf 1}_V+\sum _{k\neq \ell} \operatorname{Hom}(V_k,V_\ell)\otimes _{\mathbb{C}}\bigl(\mathbb{C}z^{-1}+\cdots +\mathbb{C}z^{-\deg (q_k-q_\ell )} \bigr)\bigg\},\]
seen as subset of the group ${\rm SL}_n\bigl(R\bigl[\!\bigl[z^{-1}\bigr]\!\bigr]\bigr)$ with
$R$ the polynomial ring $\mathbb{C}\bigl[\sum _{k\neq \ell}\operatorname{Hom}(V_k,V_{\ell})\bigr]$.
The notation $\operatorname{Pr}$ denotes ``principal part'' and is defined here as
\[\operatorname{Pr}\bigg(z\frac{{\rm d}}{{\rm d}z}+\sum _{k \ll \infty }A_kz^k\bigg)=z\frac{{\rm d}}{{\rm d}z}+\sum _{0 \leq k \ll \infty}A_kz^k.\]
This ends the construction of the universal family $\mathcal{U}$ for the case $z=0$ regular singular and~${z=\infty}$ is unramified.
The group of the automorphisms $G$ of the formal operator $z\frac{{\rm d}}{{\rm d}z}+S$ consists of the diagonal matrices with
 determinant 1, commuting with $S$. This group acts on $\mathcal{U}$ and $\mathcal{M}$ is obtained by dividing $\mathcal{U}$ by the action of $G$.
In other words, $\mathcal{M}$ is obtained from $\mathcal{U}$ by ``forgetting~$\phi$''.

 In general, this categorical quotient has singularities and, moreover need not be the quotient for the set of closed points.
In practise, we will consider a dense affine subspace of $\mathcal{M}$, obtained as closed subspace of $\mathcal{U}$ by
 {\it normalizing} suitable variables to 1 and so providing representatives for the $G$-action.

We note in passing that the above describes a (co-adjoint) orbit of a linear algebraic group over $\mathbb{C}$. Therefore,
$\mathcal{M}$ has a natural symplectic structure, see also~\cite{Bo}.

For the ramified case, one considers the cyclic covering of $\mathbb{P}^1$ of degree $e$, ramified over $0$ and $\infty$.
With respect to the variable $z^{1/e}$, one computes the universal family $z\frac{{\rm d}}{{\rm d}z}+A$ as above, restricted by the
condition of invariance under $\sigma$. The next step
is a computation of the operator on a $\sigma$-invariant basis (compare \cite[Section~12.5]{vdP-Si})) and, finally, dividing by
the action of the group $G$ of automorphisms of $z\frac{{\rm d}}{{\rm d}z}+S$ (by normalizing suitable variables to 1).

\label{(ii)} (ii) {\it The case $z=0$ regular}. From the data $(V,\{V_q\},\gamma)$ one first computes, as above in~(i), a~normalized universal family
of matrix differential operators $z\frac{{\rm d}}{{\rm d}z}+\sum _{0\leq k \ll \infty}A_kz^k$. We propose for $\mathcal{M}$ the subfamily
defined by the condition that all the entries of $A_0$ are zero. An explicit computation is needed to verify
whether $\mathcal{M}$ is not empty and to compute its dimension.

 (iii) {\it The case $z=0$ and $z=\infty$ irregular singular}. One expects a universal family of differential operators
\[
z\frac{{\rm d}}{{\rm d}z}+\sum _{-\infty \ll k \ll \infty}A_kz^k.
\] For the right-hand part \smash{$\sum _{0\leq k \ll \infty} A_kz^k$}, the method of~(i) produces
a proposal. The same holds for the left-hand part $\sum _{-\infty \ll k \leq 0}A_kz^k$. Gluing of the two proposals may result in a suitable family.
A priori, it is not clear whether the formal data at $z=0$ and at $z=\infty$ can be combined to a family $\mathcal{M}$ and a
corresponding monodromy space~$\mathcal{R}$.

 {\it Comments.}
The explicit computation of $\mathcal{R}$ works quite well. The computation of $\mathcal{M}$ in cases~(i) or~(ii) may fail or
may lead to a result unsuitable for further analysis,
due to complexity. For case (iii), one needs a good guess to start the computation.

 The number of cases where
a complete computation of the Lax pairs can be given is, again due to complexity, rather small. In case (i), the differential
equations involve \[N=\sum _{k,\ell} \deg(q_k-q_\ell)\cdot \dim V_{q_k}\cdot \dim V_{q_\ell}\] functions of $t$. This system is mostly too
large.

A first step towards simplification is normalization by scaling the basis vectors of $V$ (i.e., the step from $\mathcal{U}$ to $\mathcal{M}$).
A next step is to reduce this system by the use of invariants, which are independent of $t$ (i.e., considering fibres of
$\mathcal{M}\rightarrow \mathcal{P}^+$ and $\mathcal{R}\rightarrow \mathcal{P}$). This reduces the number of functions of $t$ involved
in the Lax pair equations but can be a source of complexity.

\section{List of all cases with one time variable}\label{list}

 The computation of this list is straightforward, but somewhat long.

{\bf With $\boldsymbol{z=0}$ regular singular or regular.}
The condition ``one time variable $t$'' implies that there are at most three Galois orbits of eigenvalues.
A regular singular case can restrict to a regular case, e.g., $z$, $tz$, $(-1-t)z$ and $z=0$ regular exists and produces trivial Stokes data. In the table representatives for the Galois orbits of the eigenvalues are given; the rightmost column indicates
in which section this example is discussed and which classical Painlev\'{e} equation $P_j$ it is related to. As in Section~\ref{FlNe}, the Flaschka--Newell
equation which is a deformation of $P_2$, is denoted $P_{2fn}$ (see \cite[Section~4.9]{vdP-Sa} for this case).
\begin{itemize}\itemsep=0pt
\item One Galois orbit, $z=0$ regular singular,
\begin{alignat*}{3}
 & z^{3/2}+tz^{1/2}, && (P_{2fn}) &&\\
 & z^{2/e}+tz^{1/e} \quad \text{for} \quad e\geq 3 .\qquad && (\text{Section~\ref{two}},\, P_4,\,P_5)&&
\end{alignat*}
\item One Galois orbit, $ z=0 $ regular,
\begin{alignat*}{3}
 & z^{5/2}+tz^{1/2}, \qquad&& (P_1) && \\
 & z^{4/3}+tz^{2/3} . && (\text{Section~\ref{three}})&&
 \end{alignat*}
\item Two Galois orbits, $ z=0 $ regular singular,
\begin{alignat*}{3}
 & z^2+tz , \ -\bigl(z^2+tz\bigr), && (P_4) \\
 & z+tz^{1/2} ,\ -2z . && (\text{Section~\ref{four}}) \\
 & z^{1/e_1} , tz^{1/e_2}\quad \text{for} \quad e_1\geq e_2\geq 2 .\qquad&& (\text{Section~\ref{five}}, \, P_6)
 \end{alignat*}
\item Two Galois orbits, $ z=0 $ regular,
\begin{alignat*}{3}
& z^3+tz ,\ -\bigl(z^3+tz\bigr) .\qquad && (\text{Section \ref{six}},\, P_2)
\end{alignat*}
\item Three Galois orbits, $ z=0 $ regular singular,
\begin{alignat*}{3}
 & z ,\ tz , \ (-1-t)z , \qquad&& (\text{Section \ref{seven}},\, P_6) \\
& z^{1/e} ,\ tz , \ -tz \quad \text{with}\quad e>1 .\qquad&& (\text{Section~\ref{sec9}}) \end{alignat*}
\item Three Galois orbits, $ z=0 $ regular,
\begin{alignat*}{3}
&z^2 , \ -z^2-tz , \ tz . \qquad&& (\text{Section \ref{nine}},\, P_4)
\end{alignat*}
\end{itemize}

{\bf With both $\boldsymbol{z=0}$ and $\boldsymbol{z=\infty}$ irregular singular.}
\begin{itemize}\itemsep=0pt
\item $1/z,-1/z$ at $z=0$ and $tz$, $-tz$ at $z=\infty$ (Section~\ref{sec11}, $P_3(D_6)$).
\item $z^{-1/2}$ at $z=0$ and $tz$, $-tz$ at $z=\infty$ (Section~\ref{sec12}, $P_3(D_7)$).
\item $z^{-1/n}$ at $z=0$ and $tz^{1/n}$ at $z=\infty$ with $n\geq 2$ (Section~\ref{twelve},\, $P_3(D_8)$)
\end{itemize}

More general: $n_1,n_2\geq 2$ and $z^{-1/n_1}$ at $z=0$ and $tz^{1/n_2}$ at $z=\infty$
 (with suitable multiplicities).

We discuss these cases in the indicated sections.

\section[n-th root of z squared plus t times n-th root of z, hierarchy of Noumi and Yamada]{$\boldsymbol{z^{2/n}+tz^{1/n}}$, $\boldsymbol{n\geq 3}$, hierarchy of Noumi and Yamada}\label{two}
We study the structure of the moduli spaces $\mathcal{M}_n$, $\mathcal{R}_n$ and the Lax pair
computations, separately for $n$ odd and $n$ even.

\subsection[The moduli spaces M\_n and R\_n for odd n]{The moduli spaces $\boldsymbol{\mathcal{M}_n}$ and $\boldsymbol{\mathcal{R}_n}$ for odd $\boldsymbol{n}$}\label{2.1}

 {\it Computations for $\mathcal{R}_n$}.
 A module $M\in \mathcal{M}_n$ has the eigenvalues $q_j=\sigma ^j(q_0)=\omega^{2j}z^{2/n}+t\omega ^jz^{1/n}$ for $j=0,\dots ,n-1$
at $z=\infty$, where $\omega:={\rm e}^{2\pi {\rm i}/n}$.

The tuple $(V,\{V_q\},\gamma,\{{\rm St}_d\})$ that classifies $M$ at $z=\infty$
has the form $V=\mathbb{C}e_0\oplus \cdots \oplus \mathbb{C}e_{n-1}$ where $\mathbb{C}e_j=V_{q_j}$ for
$j=0,\dots , n-1$. This basis is chosen such that
 $\gamma$ satisfies $e_0\mapsto e_1\mapsto \cdots \mapsto e_{n-1}\mapsto e_0$.

 The space of the Stokes matrices at $z=\infty$ is isomorphic to $\mathbb{C}^{N}$, where $N=n(n-1)\cdot \frac{2}{n}=2(n-1)$.
 Since the basis vectors $e_0,\dots , e_{n-1}$ of $V$ are unique up to multiplication by the same constant, one finds $\mathcal{R}_n=\mathbb{C}^{2(n-1)}$ (see Lemma~\ref{lemma01}) and $\dim \mathcal{M}_n=1+\dim \mathcal{R}_n=1+2(n-1)$.

For a module $M\in \mathcal{M}_n$, the data of the topological monodromy $\operatorname{mon}_0$ at $z=0$ is the conjugacy class of a matrix in ${\rm SL}_n$.
A conjugacy class is mapped to its characteristic polynomial~${T^n+a_{n-1}T^{n-1}+\cdots +a_1T+(-1)^n}$ and the parameter space
$\mathcal{P}_n$ is the space of all possible characteristic polynomials and thus isomorphic to $\mathbb{C}^{n-1}$.
The ``monodromy identity'' and a~nontrivial, explicit computation, similar to the ones in \cite{CM-vdP}, shows:

{\it $\mathcal{R}_n\rightarrow \mathcal{P}_n$ is surjective and the fibres have dimension $(n-1)$.}
 The fibre for $n=3$ is computed in \cite{vdP-T} to be the affine surface $xyz+x^2+p_1x+p_2y+p_3z=p_4$ for certain constants $p_i$.
 This is expected because the computation of the Lax pair equation leads to an identification with $P_4$.
 A further computation shows that $\operatorname{mon}_0={\bf 1}$ is not possible, although characteristic polynomial~${(T-1)^3}$ is possible.

 For $n=5$, we present details of the computation the fibre. Write
 $\omega ={\rm e}^{2\pi {\rm i}/5}$, $q_0=z^{2/5}+tz^{1/5}$,
 $q_1=\omega^2z^{2/5}+\omega t z^{1/5}, \dots , q_4=\omega^3z^{2/5}+\omega^4tz^{1/5}$.
 The singular directions in $[0,1)$ are $7/8$ for~${q_0-q_2}$, $ q_3-q_4$, $5/8$ for $q_1-q_4$, $ q_3-q_2$, $3/8$ for $q_1-q_2$, $ q_3-q_0$ and
 $1/8$ for $q_1-q_0$, $ q_4-q_2$. The fibres of $\mathcal{R}_5\rightarrow \mathcal{P}_5$ are rational 4-folds. After eliminating 3 of the 8
 variables for $\mathcal{R}_5$, the affine fibre is given by a degree 5 polynomial equation in 5 variables and with 4 parameters.
This computation also shows that $\operatorname{mon}_0={\bf 1}$ is not possible for $n=5$.

{\it Construction of $\mathcal{M}_n$ and the Lax pairs}.
We make the method explained in Section~\ref{section0} explicit. A differential module
$M\in \mathcal{M}_n$ over $\mathbb{C}(z)$ is replaced by $N:=\mathbb{C}\bigl(z^{1/n}\bigr)\otimes M$. Let $D$ denote the differential operator
\smash{$\nabla_{z\frac{{\rm d}}{{\rm d}z}}$} on $M$. Now $D$ extends uniquely to a~differential operator, also called~$D$, on~$N$. This $D$ commutes
with the semi-linear automorphism~${\sigma\colon N\rightarrow N}$, induced by the automorphism $\sigma$ of $\mathbb{C}\bigl(z^{1/n}\bigr)$, given by
$\sigma z^{1/n}=\omega z^{1/n}$. Thus the $M\in \mathcal{M}$ are replaced by pairs $(N,\sigma)$, as above.

 Let $e_0,\dots ,e_{n-1}$ be a basis of $N$ over $\mathbb{C}\bigl(z^{1/n}\bigr)$ such that the map $\sigma$
 satisfies $\sigma \colon e_0\mapsto e_1\mapsto \cdots \mapsto e_{n-1}\mapsto e_0$. The operator $D$ is determined by
 $D(e_0)$. The formula
 \[
 D(e_0)=\bigl(z^{2/n}+tz^{1/n}\bigr)e_0+\sum _{i=1}^{n-1}\bigl(a_i+b_iz^{1/n}\bigr)e_i
 \]
 is supported by \cite[Sections~12.3--12.5]{vdP-Si} (compare Section~\ref{section0}).
 For the operator $E:=\frac{{\rm d}}{{\rm d}t}+B$ such that $\{D, E\}$ forms a Lax pair, one can verify the assumption that $E$
is the $\sigma$-invariant operator with \smash{$E(e_0)=z^{1/n}e_0+\sum_{j=1}^{n-1} c_je_j$}.

 One deduces from this the matrix of $D$ with respect to the basis $B_0,\dots ,B_{n-1}$ of $M:=N^{\langle\sigma\rangle}$,
 where $B_j:=\sum_{k=0}^{n-1} \sigma^k\bigl(z^{j/n}e_0\bigr)$ for $0\leq j\leq n-1$ and $B_n:=zB_0$, $
 B_{n+1}:=zB_1$. The formula is
 \[D(B_j)=\frac{j}{n}B_j+\sum _{i=1}^{n-1}a_i\omega^{-ij}B_j+tB_{j+1}+\sum_{i=1}^{n-1}b_i\omega^{-i(j+1)}B_{j+1}+B_{j+2}.\]
 The formula for $E$ on this basis is
 \[
 E(B_j)= B_{j+1}+\left(\sum _{k=1}^{n-1}\omega^{-kj}c_k\right)B_j.
 \]
 Put $\epsilon_j=\frac{j}{n}+\sum _{i=1}^{n-1}a_i\omega^{-ij}$ and $f_j=t+\sum_{i=1}^{n-1} b_i\omega^{-ij}$. The operator $D$ is
\[
 z\frac{{\rm d}}{{\rm d}z} + \left(\begin{matrix}
 \epsilon_0 &0 &0 &* &* &z & zf_0\\
 f_1 & \epsilon_1& 0 &0 &* & 0 & z \\
 1 & f_2 & \epsilon_2 & 0 & * & * & 0 \\
 0 & 1 & f_3 &\epsilon_3 & 0 & * & * \\
 * & * & * & * & * & * & * \\
 * & * & * & 1 & f_{n-2} &\epsilon_{n-2} & 0 \\
 * & * & * & 0 & 1 & f_{n-1} & \epsilon_{n-1}
 \end{matrix}\right),
 \]
 note that $\sum \epsilon _j=\frac{n-1}{2}$ and $\sum f_j=nt$.
The $\epsilon_0,\dots ,\epsilon_{n-1}$ are the parameters of the family.
 The~$\bigl\{{\rm e}^{2\pi {\rm i} \epsilon_j}\bigr\}$ are the eigenvalues of the topological monodromy at $z=0$. These can be seen as parameters for $\mathcal{R}_n$. For an isomonodromic family the $\epsilon_j$ are constant and the
$f_0,\dots ,f_{n-1}$ are analytic functions of the parameter $t$.

 The operator $E$ reads on the above basis
\[\frac{{\rm d}}{{\rm d}t}+ \left(\begin{matrix}
 g_0 &0 &0 &0 &* &* & z\\
 1 & g_1& 0 &0 &* & * & 0 \\
 0 & 1 & g_2 & 0 & * & * & 0 \\
 * & * & * & * & * & * & * \\
 * & * & * & * &* & * & * \\
 0 & 0 & 0 & 0 & 1 &g_{n-2} & 0 \\
 0 & 0 & 0 & 0 & 0 & 1 & g_{n-1}
 \end{matrix}\right)
 \]
with $g_j=\bigl(\sum _{k=1}^{n-1}\omega^{-kj}c_k\bigr)$ and $\sum g_j=0$.
For an isomonodromic family, the $\{g_j\}$ are functions of $t$ and are in fact eliminated by the Lax pair
condition $DE=ED$.

For $n=5$, the Painlev\'e type differential system for this Lax pair is 
 \begin{gather*}
 f_1'=f_1(-f_1-2f_2-2f_4+t)+2\epsilon_1+\epsilon_2+\epsilon_3+\epsilon_4,\\
f_2'=f_2(-2f_1+f_2-2f_4-t)-\epsilon_1+\epsilon_2,\qquad
 f_3'=f_3(-2f_1-f_3-2f_4+t)-\epsilon_2+\epsilon_3,\\
 f_4'=f_4(2f_1+2f_3+f_4-t)-\epsilon_3+\epsilon_4.\end{gather*}

For $n=7$, one has $ \sum f_j=7t$, $ \sum \epsilon_j=3$ and
 \begin{gather*}
 f'_0 = f_0(-f_1+f_2-f_3+f_4-f_5+f_6)+ \epsilon_0 - \epsilon_6 + 1,\\
f'_1 =f_1(f_0-f_2+f_3-f_4+f_5-f_6) - \epsilon_0 + \epsilon_1,\\
f'_2 =f_2( -f_0+f_1 -f_3+f_4-f_5+f_6)- \epsilon_1 + \epsilon_2,\\
f'_3 = f_3(f_0-f_1+f_2-f_4+f_5-f_6) - \epsilon_2 + \epsilon_3,\\
f'_4 =f_4(-f_0+f_1-f_2+f_3-f_5+f_6) - \epsilon_3 + \epsilon_4,\\
f'_5 = f_5(f_0-f_1+f_2-f_4+f_4-f_6) - \epsilon_4 + \epsilon_5,\\
f'_6 =f_6(-f_0+f_1-f_2+f_3-f_4+f_5) - \epsilon_5 + \epsilon_6.\end{gather*}
The general case for odd $n$ is similar.

 {\it Observation}. Apart from small changes the above is the symmetric Lax pair
 introduced by Noumi, Yamada et al.\ (see \cite{N-Y,S-H-C}). The changes are
 \begin{itemize}\itemsep=0pt
\item[(a)] $\epsilon_j$ is changed into $\epsilon_j-\frac{n-1}{2n}$ in order to obtain a matrix with trace zero. This corresponds to a small change in the definition of $D$, namely
 \[D(e_0)=\left(z^{2/n}+tz^{1/n}-\frac{n-1}{2n}\right) e_0+\sum _{i=1}^{n-1}\bigl(a_i+b_iz^{1/n}\bigr) e_i.\]
\item[(b)] A notational change of $t$ into $\frac{t}{n}$.

\item[(c)] Transposing the matrix. This is due to the relation between a covariant solution space and a contravariant solution space.
\end{itemize}

 {\it An alternative method for $n=3$, i.e., the Noumi--Yamada form for $P_4$.}
The Lax pair equations are equivalent to $ED(e_0)=DE(e_0)$. The
normalized operator $D$ given as
\[De_0=\left(z^{2/3}+tz^{1/3}+\frac{2}{3}\right)e_0+(a_1+b_1z^{1/3})e_1+\bigl(a_2+b_2z^{1/3}\bigr)e_2,\] where
$a_1$, $a_2$ are constants and $b_1$, $b_2$ are functions of $t$, commutes with the operator $ E$ such that
$E(e_0)=z^{1/3}e_0+c_1e_1+c_2e_2$ (for suitable functions $c_1$, $c_2$ of $t$) if and only $b_1$, $b_2$ satisfy the differential equations
\begin{gather*}
\begin{split}
& b_1'=a_1(1-\omega)+b_1t(2\omega +1)+b_2^2(-2\omega -1),\\
& b_2'=a_2(\omega +2)+b_1^2(2\omega +1) +b_2t(-2\omega -1).
\end{split}
\end{gather*}
This is in fact a Hamiltonian system $b_1'=\frac{\partial H}{\partial b_2}$, $b_2'=-\frac{\partial H}{\partial b_1}$
with $\omega={\rm e}^{2\pi {\rm i}/3}$ and
\[
H=-\left(\frac{b_1^3}{3}+\frac{b_2^3}{3}\right)(2\omega +1)+b_1b_2t(2\omega +1)-b_1a_2(\omega +2) -b_2a_1(\omega -1).\]

After a linear change of variables this Hamiltonian coincides with Okamoto's standard Hamiltonian for $P_4$
(see \cite[p.~265]{O1}).

\subsection[The moduli spaces M\_n and R\_n for even n]{The moduli spaces $\boldsymbol{\mathcal{M}_n}$ and $\boldsymbol{\mathcal{R}_n}$ for even $\boldsymbol{n}$}

We proceed as in Section~\ref{2.1}.
Write $n=2m$ and $\omega ={\rm e}^{2\pi {\rm i}/n}$. The eigenvalues at $z=\infty$ are
$q_j= \omega^{j/m}z^{1/m}+ \omega ^{j/2m}tz^{1/2m}$ for $j=0,\dots ,2m-1$.

Now $N:=\sum _{i\neq j} \deg (q_i-q_j)=4m-3$, $\mathcal{R}_{2m}\cong \mathbb{C}^{N}$ and $\mathcal{M}_{2m}$
has dimension $1+4m-3$. The parameter space $\mathcal{P}_{2m}$ for the monodromy space
consists of the characteristic polynomials of the monodromy at $z=0$. Since $\Lambda^{2m}M$ is the trivial differential module, this
monodromy has determinant 1. Thus $\dim \mathcal{P}_{2m}=2m-1$. A similar explicit computation as the one
mentioned in Section~\ref{2.1} shows:
{\it $\mathcal{R}_{2m}\rightarrow \mathcal{P}_{2m}$ is surjective and the fibres have dimension $2m-2$}.
We make the method of construction a differential operator, a Lax pair and Painlev\'e type equations explicit for $n=4$. The general case is discussed after that.

\subsubsection[The case n=4]{The case $\boldsymbol{n=4}$}\label{section2.2.1}
Following Section~\ref{section0}, we may assume that the differential operator $D$ has on the basis $e_0$, $e_1$, $e_2$, $e_3$ the
 formula
 \[D(e_0)=\left(z^{1/2}+\frac{t}{4} z^{1/4}-3/8\right) e_0 +\bigl(a_1+b_1z^{1/4}\bigr) e_1+a_2 e_2+\bigl(a_3+b_3z^{1/4}\bigr) e_3, \]
 and $D$ commutes with $\sigma$ defined by $\sigma e_j=e_{j+1}$ for $j=0,1,2$ and $\sigma e_3=e_0$ and
 $\sigma z^{1/4}={\rm i}z^{1/4}$.
 Consider the following basis of invariants:
 \begin{alignat*}{3}
& B_0=e_0+e_1+e_2+e_3,\qquad && B_1=z^{1/4}\bigl(e_0+{\rm i}e_1+{\rm i}^2e_2+{\rm i}^3e_3\bigr),& \\
& B_2=z^{1/2}(e_0-e_1+e_2-e_3),\qquad && B_3=z^{3/4}(e_0-{\rm i}e_1-e_2+{\rm i}e_3).&
\end{alignat*}
 The matrix of $D$ with respect to this basis is
\[ \begin{pmatrix} -\frac{3}{8}+a_1+a_2+a_3 & 0 & z & z\bigl(\frac{t}{4}+b_1+b_3\bigr) \\
 \frac{t}{4}-{\rm i}b_1+{\rm i}b_3 & -\frac{1}{8}-{\rm i}a_1-a_2+{\rm i}a_3 & 0 &z \\
 1 &\frac{t}{4}-b_1-b_3 &\frac{1}{8}-a_1+a_2-a_3 & 0 \\
 0 & 1 & \frac{t}{4}+{\rm i}b_1-{\rm i}b_3 &\frac{3}{8}+{\rm i}a_1-a_2-{\rm i}a_3
 \end{pmatrix} \]
 and $D$ is equal to the differential operator
 \[
 z\frac{{\rm d}}{{\rm d}z}+\begin{pmatrix} \epsilon_0&0&z&zf_0\\
 f_1&\epsilon_1&0&z\\ 1&f_2&\epsilon_2& 0\\ 0&1&f_3&\epsilon_3 \end{pmatrix}
 \]
 with $\sum \epsilon _j=0$, $f_0+f_2=f_1+f_3=\frac{t}{2}$. The $\epsilon_0,\dots ,\epsilon_3$ are parameters.

 The operator $D$ is completed to a Lax pair by the differential operator $E$ with respect to~$\frac{{\rm d}}{{\rm d}t}$. This
 operator, written on the basis $e_0$, $e_1$, $e_2$, $e_3$ is $\sigma$-invariant and has the form
 $E(e_0)=z^{1/4}e_0+\sum _{j=1}^3h_je_j$ for suitable functions $h_1$, $h_2$, $h_3$ of $t$. On the basis
 $B_0 $, $B_1$, $B_2$ one obtains
 \[
 E:=\frac{{\rm d}}{{\rm d}t}+ \begin{pmatrix}g_0& 0& 0&z\\ 1&g_1&0&0\\0&1&g_2&0\\0&0&1&g_3 \end{pmatrix}
 \]
 with $\sum g_j=0$. The assumption that $E$ commutes with $D$ produces equations for
 the derivatives of $f_0$, $f_1$, $f_2$, $f_3$, seen as functions of $t$. These formulas are similar to those derived by
 Noumi--Yamada. Moreover, combining the differential equations for $f_0$ and $f_1$ leads to the standard $P_5$ equation,
 see \cite{N-Y,S-H-C} for details.

{\it An alternative computation} is a consequence of the observation that isomonodromy is given by
 $ DE(e_0)=ED(e_0)$ and $a_1,a_2,a_3 \in \mathbb{C}$. This produces equations with
 parameters $a_1$, $a_2$, $a_3$
 \begin{gather*}
 4t\cdot \frac{{\rm d}b_1}{{\rm d}t} = -16{\rm i}b_1^2b_3 + t^2b_1{\rm i} + 16{\rm i}b_3^3 - 4{\rm i}a_1t + 4a_1t - 32a_2b_3\\
 4t\cdot \frac{db_3}{dt} = -16{\rm i}b_1^3 + 16{\rm i}b_1b_3^2 - t^2b_3{\rm i} + 4{\rm i}a_3t - 32a_2b_1 + 4a_3t,\\
 h_1 = -{\rm i}b_1/2 + b_1/2, \qquad t\cdot h_2 = 2b_1^2{\rm i} - 2b_3^2{\rm i} + 4a_2,\qquad h_3 = b_3{\rm i}/2 + b_3/2. \end{gather*}
 One observes that the equations for $b_1$, $b_3$ form a Hamiltonian system with
 \begin{gather*}
 t\frac{{\rm d}b_1}{{\rm d}t}=\frac{\partial H}{\partial b_3},\qquad t\frac{{\rm d}b_3}{{\rm d}t}=-\frac{\partial H }{\partial b_1},\\
H=\frac{{\rm i}\bigl(b_1^2-b_3^2\bigr)^2}{t}+\frac{{\rm i}tb_1b_3}{4}+\frac{4a_2\bigl(b_1^2-b_3^2\bigr)}{t}-(1+{\rm i})a_3b_1+(1-{\rm i})a_1b_3.\end{gather*}

{\it Comments}.
The above Hamiltonian $H$ and the differential equations for $b_1$, $b_3$ coincide, after a linear change of variables,
with Okamoto's standard polynomial Hamiltonian for $P_5$, see \cite[p.~265]{O1}.

{\it The fibers of $\mathcal{R}_4\rightarrow \mathcal{P}_4$}. The eigenvalues at $z=\infty$ are:
$q_0=z^{1/2}+tz^{1/4}$, $q_1=-z^{1/2}+{\rm i}tz^{1/4}$, $q_2=z^{1/2}-tz^{1/4}$, $q_3=-z^{1/2}-{\rm i}tz^{1/4}$.
 The differences $q_0-q_1$, $ q_0-q_3$, $ q_2-q_1$, $ q_2-q_3$ have the form~${2z^{1/2}+\cdots}$ and further
 $q_0-q_2=2tz^{1/4}$ and $q_1-q_3=2{\rm i}tz^{1/4}$. There is one singular direction in $[0,1)$ for the terms
 $\pm 2z^{1/2}$. For the terms $\pm 2tz^{1/4}$, $\pm 2{\rm i}tz^{1/4}$ there is only one singular direction in $[0,1)$.
 For a suitable choice of $t$, this leads to the monodromy identity
 \[\operatorname{mon}_\infty= \begin{pmatrix} & & &-1 \\ 1& & & \\ & 1& & \\ & &1 & \end{pmatrix} \begin{pmatrix} 1 & & & \\ & 1& &y \\ & &1 & \\ & & & 1\end{pmatrix}
\begin{pmatrix} 1 & & & \\ x_1&1 &x_2 & \\ & & 1& \\ x_3& &x_4 & 1\end{pmatrix}. \]

 One observes that $\operatorname{mon}_0=\operatorname{mon}_\infty^{-1}$ cannot be the identity. Thus for the data considered here, $z=\infty$ singular with
 eigenvalues $z^{1/2}+tz^{1/4}$ and its conjugates and $z=0$ {\it regular} cannot be combined.

 The space $\mathcal{P}_4$ is parametrized by $p_1$, $p_2$, $p_3$, where the characteristic polynomial of $\operatorname{mon}_\infty$ is written as $T^4+p_3T^3+p_2T^2+p_1T+1$. For a suitable choice of
 elimination of two variables (e.g., $x_1$, $x_2$), the fibres are described by a cubic equation in three variables $y$, $x_3$, $x_4$ and parameters~$p_1$,~$p_2$,~$p_3$. The equation reads
 $v_1v_2v_3+*v_1^2+*v_2^2+*v_1+*v_2+*v_3+*=0$ for suitable affine expressions $*$'s in the parameters
 $p_1$, $p_2$, $p_3$. This is expected, since the related Painlev\'e equation $P_5$ has the same cubic equation for its monodromy.

 \subsubsection[The general case with n=2m]{The general case with $\boldsymbol{n=2m}$}

 Consider the $\mathbb{C}(t)\bigl[z^{1/2m}\bigr]$-lattice with basis $e_0,\dots ,e_{2m-1}$, provided with
 the action of $\sigma$ given by the formulas: $\sigma z^\lambda ={\rm e}^{2\pi {\rm i} \lambda}z^\lambda$ and
 $\sigma e_j=e_{j+1}$ for $j=0,\dots ,2m-2$ and $\sigma e_{2m-1}=e_0$. The operator $D$
 (with respect to the derivation $z\frac{{\rm d}}{{\rm d}z}$) representing $\mathcal{M}_{2m}$, is $\sigma$-invariant
 and is given~by
 \[D(e_0)=\left(z^{1/m}+\frac{t}{2m}z^{1/2m}-\frac{2m-1}{4m}\right)e_0 +\sum_{j=1}^{2m-1}\bigl(a_j+b_jz^{1/2m}\bigr)e_j,\]
with varying $a_j,b_j\in \mathbb{C}$ and where $b_m=0$.
 On the basis $B_0,\dots, B_{2m-1}$ of invariants, $D$ has the form $z\frac{{\rm d}}{{\rm d}z}+A_0+zA_1$.
 We make this explicit for $n=6$, $m=3$. The general $n=2m$ case is similar,
 \[ D=z\frac{{\rm d}}{{\rm d}z}+
\begin{pmatrix} \epsilon_0 &0 &0 &0 &z &z\bigl(\frac{t}{6}+f_0\bigr) \\
 \frac{t}{6}+f_1&\epsilon_1 &0 &0 &0 &z \\
 1&\frac{t}{6}+f_2 &\epsilon_2 &0 &0 &0 \\
 0& 1& \frac{t}{6}+f_3&\epsilon_3 &0 &0 \\
 0&0 &1 &\frac{t}{6}+f_4 &\epsilon_4 &0\\
 0&0 &0 & 1&\frac{t}{6}+f_5 & \epsilon _5\end{pmatrix}. \]
 The $\epsilon_j$ are linear combinations of $a_1,\dots ,a_5$ satisfying $\sum \epsilon_j=0$, and the
 $f_0,\dots ,f_5$ are linear combinations of $b_1$, $b_2$, $b_4$, $b_5$
 such that the relations $f_0+f_2+f_4=f_1+f_3+f_5=0$ hold.

 One observes that the data of the eigenvalues of $A_0$ are equivalent to $a_1,\dots , a_{2m-1}$. Thus for an
 isomonodromic family the $a_j$ are constant and the $b_1,\dots, b_{2m-1}$ (with the condition~${b_m=0}$)
 are functions~of~$t$. The differential equations for the $b_j$ are derived from a
 Lax pair $z\frac{{\rm d}}{{\rm d}z}+A_0+zA_1$, $ \frac{{\rm d}}{{\rm d}t}+B$ with an, a priori, unknown matrix $B$ depending on $t$ and $z$.
 The action of the operator~${\frac{{\rm d}}{{\rm d}t}+B}$ on the $\mathbb{C}(t)\bigl[z^{1/2m}\bigr]$-lattice with basis $e_0,\dots ,e_{2m-1}$ is called $E$. It is $\sigma$-invariant and one can prove that $E$ is given by
 \[ E(e_0)=z^{1/2m}e_0+\sum _{j=1}^{2m-1}h_je_j \qquad \mbox{for certain functions } h_1,\dots ,h_{2m-1} \mbox{ of } t.\]
 We make this explicit for $n=6$, $m=3$ (again, the general case $n=2m$ is similar),
 \[ E=\frac{{\rm d}}{{\rm d}t}+
\begin{pmatrix} g_0 &0 &0 &0 &0 &z \\
 1&g_1 &0 &0 &0 &0 \\
 0&1 &g_2 &0 &0 &0 \\
 0& 0& 1&g_3 &0 &0 \\
 0&0 &0&1 &g_4 &0 \\
 0&0 &0 & 0&1 &g _5\end{pmatrix}\qquad\mbox{and} \qquad\sum g_j=0. \]
The Painlev\'e type equations are similar to those in Section~\ref{section2.2.1}.

 {\it An alternative computation}. From the equation $DE(e_0)=ED(e_0)$ and all $a_j$ are constants, the differential equations for $b_1,\dots ,b_{2m-1}$ follow. For the case $n=6$, $m=3$, one obtains the
 following Painlev\'e type equations
 for $b_1$, $b_2$, $b_4$, $b_5$ ($b_3=0$, $w={\rm e}^{2\pi {\rm i}/6}$)
\begin{align*}
-18t\frac{{\rm d}b_1}{{\rm d}t} ={}& 288b_1b_2b_4w - 2b_1t^2w - 24b_2b_5tw - 288b_4^2b_5w + 18a_1tw \\
& - 144b_1b_2b_4
+ b_1t^2 + 12b_2b_5t + 144b_4^2b_5 - 18a_1t + 216a_3b_4,\\
- 6t\frac{{\rm d}b_2}{{\rm d}t} ={}& -4b_1^2tw + 96b_1b_2b_5w - 2b_2t^2w + 12b_4^2tw - 96b_4b_5^2w + 6a_2tw \\
& + 2b_1^2t - 48b_1b_2b_5 + b_2t^2 - 6b_4^2t + 48b_4b_5^2 - 12a_2t + 72a_3b_5, \\
 -6t\frac{{\rm d}b_4}{{\rm d}t} ={}& 96b_1^2b_2w - 96b_1b_4b_5w - 12b_2^2tw +
 2b_4t^2w + 4b_5^2tw - 6a_4tw \\ & - 48b_1^2b_2 + 48b_1b_4b_5 + 6b_2^2t
 - b_4t^2 - 2b_5^2t + 72a_3b_1 - 6a_4t,\\
-18t \frac{{\rm d}b_5}{{\rm d}t} ={}& 88b_1b_2^2w + 24b_1b_4tw - 288b_2b_4b_5w + 2b_5t^2w
 - 18a_5tw \\
 & - 144b_1b_2^2 - 12b_1b_4t + 144b_2b_4b_5
 - b_5t^2 + 216a_3b_2.
 \end{align*}
There is a Hamiltonian function $H$ such that
\[ \frac{{\rm d}b_5}{{\rm d}t}=-\frac{\partial H}{\partial b_1},\qquad \frac{{\rm d}b_1}{{\rm d}t}=\frac{\partial H}{\partial b_5},\qquad \frac{{\rm d}b_4}{{\rm d}t}=-\frac{\partial H}{\partial b_2},\qquad
\frac{{\rm d}b_2}{{\rm d}t}=\frac{\partial H}{\partial b_4},\]
where $H$ is defined by
\begin{gather*}
3tH= (w - 1/2)(b_2b_4 + b_1b_5/3)t^2 + \bigl((-2w + 1)b_2^3 + \bigl(\bigl(2b_5^2 - 3a_4\bigr)w - b_5^2 - 3a_4\bigr)b_2 \\
 \phantom{3tH=}{} + (1-2w)b_4^3
 + \bigl(\bigl(2b_1^2 - 3a_2\bigr)w - b_1^2 + 6a_2\bigr)b_4 - (3a_1b_5 + 3a_5b_1)w + 3a_1b_5\bigr)t \\
 \phantom{3tH=}{}+ 12(-b_1b_2 + b_4b_5)((-b_1(2w - 1)b_2 + b_5(2w - 1)b_4 - 3a_3),
 \end{gather*}
 with $w={\rm e}^{2\pi {\rm i} /6}$. This is related to the equations studied in \cite{H-K-N-S}.

\section[z-power(4/3)+t*z-power(2/3) and regular z=0]{$\boldsymbol{z^{4/3}+tz^{2/3}}$ and regular $\boldsymbol{z=0}$}\label{three}
The $q_0=z^{4/3}+tz^{2/3}$, $q_1=\omega z^{4/3}+\omega ^2tz^{2/3},
q_2=\omega^2z^{4/3}+\omega tz^{2/3}$ with $\omega={\rm e}^{2\pi {\rm i} /3}$ are the eigenvalues. First, we assume that $z=0$ is a~regular singular point. Then, the monodromy
space~$\mathcal{R}$ is isomorphic to $\mathbb{C}^8$. The monodromy $\operatorname{mon}$ at $z=\infty$ (or equivalently at $z=0$) is a~product of the formal monodromy and 8 Stokes matrices. The singular directions in~$[0,1)$ are
\smash{$\frac{15}{16}$}, \smash{$\frac{13}{16}$}, \smash{$\frac{11}{16}$}, $\frac{9}{16}$, $\frac{7}{16}$, $\frac{5}{16}$, $\frac{3}{16}$, $\frac{1}{16}$
for $q_1-q_2$, $q_1-q_0$, $q_2-q_0$, $q_2-q_1$, $q_0-q_1$, $q_0-q_2$, $q_1-q_2$, $q_1-q_0$.
Each Stokes matrix has one nontrivial entry and these are in the same order
$x_{12}$, $x_{10}$, $x_{20}$, $x_{21}$, $x_{01}$, $x_{02}$, $y_{12}$, $y_{10}$.
A computation shows that the map $\mathcal{R}\rightarrow {\rm SL}_3(\mathbb{C})$, which send the Stokes data to $\operatorname{mon}$,
is birational. Moreover, the preimage of ${\bf 1} \in {\rm SL}_3(\mathbb{C})$ is one point, namely
$x_{01} = -1$, $ x_{02} = 1$, $ x_{10} = 1$, $ x_{12} = -1$, $ x_{20} = -1$, $ x_{21} = 1$, $ y_{10} = 1$, $ y_{12} = -1$.

 {\it The rather curious conclusion is that the monodromy space, for the case that $z=0$ is regular, consists of a single point.}

The formal matrix differential operator is
\[
z\frac{{\rm d}}{{\rm d}z}+ \begin{pmatrix} -\frac{1}{3} &tz &z^2 \\ z&0 & tz \\ t& z& \frac{1}{3} \end{pmatrix}.\]
The {\it guess} that $\mathcal{M}$ is represented by the family of operators of the form
\[\frac{{\rm d}}{{\rm d}z}+ {\begin{pmatrix} 0 &\frac{3t}{2} &z \\ 1&0 & \frac{3t}{2} \\ 0& 1& 0 \end{pmatrix}}
\]
is {\it confirmed} by a Lax pair computation. The corresponding scalar differential equation is $y^{(3)}-3ty^{(1)}-zy=0$.

 {\it One concludes that the Stokes
matrices, which are nontrivial, in this family do not depend on $t$.}

 \section[z plus sqrt(tz), z-sqrt(tz), -2z and regular singular z=0]{$\boldsymbol{z+t^{1/2}z^{1/2}, z-t^{1/2}z^{1/2},-2z}$ and regular singular $\boldsymbol{z=0}$}\label{four}
 The above formulas are the eigenvalues $q_0$, $q_1$, $q_2$ at $z=\infty$.
For $t\in \mathbb{R}$, $t>0$, the singular directions in $[0,1)$ are $1/2$ for $q_0-q_2$, $q_1-q_2$ and $0$ for $q_1-q_0$, $q_2-q_0$, $q_2-q_1$. The monodromy identity is
\[\operatorname{mon}= \begin{pmatrix} 0 &-1 & 0\\ 1&0 &0 \\ 0 &0 &1 \end{pmatrix}
\begin{pmatrix} 1 &0 &0 \\ 0&1 &0 \\ x_{02}& x_{12}&1 \end{pmatrix}
\begin{pmatrix} 1 &x_{10} &x_{20} \\ 0&1 &x_{21} \\ 0&0 &1 \end{pmatrix}.\]
This presentation is unique up to scaling the third basis vector.
One has $\dim \mathcal{R}=4$, $\dim \mathcal{P}=2$ and the fibers are affine cubic surfaces
with an equation of the form $x_1x_2x_3+x_1^2+*x_1+*x_2+*x_3+*=0$.
The data do not match with $z=0$ regular, since $\operatorname{mon}={\bf 1}$ has no solution.

In order to simplify the computation, the eigenvalues at $z=\infty$ are replaced by $z^{1/2}$, $-z^{1/2}$, $tz$, obtained by shifting and scaling
the given $z\pm t^{1/2}z^{1/2}$, $-2z$. This has no effect on monodromy and Lax pairs.
 The method of \cite[Section~12.5]{vdP-Si}, explained in
Section~\ref{section0}, produces an explicit formula for the universal family with these data.
 It depends on 5~variables. By scaling the basis vectors, one of the variables is normalized to 1 and the resulting operator has the form
$z\frac{{\rm d}}{{\rm d}z}+ A$ with
\[ A= \begin{pmatrix} -2a_1&z& a_4t-a_3\\ 1&2a_1+a_3t & a_3t \\ -(a_1+2a_8)t-1& tz & tz-a_3t \end{pmatrix}.\]
The further computations are described as follows.
The Lax pair formalism produces a set of differential equations $\frac{{\rm d}a_i}{{\rm d}t}=R_i$ for $i=1,3,4,8$ where the $R_i$ are rational
functions in $a_1$, $a_3$, $a_4$, $a_8$, $t$. The eigenvalues of the residue matrix of $A$ at $z=0$ are independent of $t$. One of the
eigenvalues is $2a_1+a_3t$. Adding the equation $\frac{{\rm d}(2a_1+a_3t) }{{\rm d}t}=0$ to the above system of equations eliminates $a_4$
and produces: $a_1=c_1$, $a_3=c_2/t$ with constants $c_1$, $c_2$ and a Riccati equation for~$a_8$. In particular,
 {\it The Painlev\'e equation of this family is solvable by classical functions.}

We remark that the monodromy space for the
case $\pm z^{1/2}$, $tz$ produces again a 2-dimensional family of affine cubic surfaces of the type:
$x_1x_2x_3+x_1^2+*x_1+*x_2+*x_3+*=0$.

However, in this case, there seems to be no relation with $P_4$.

 \section[e1-th root of z, t times e2-th root of z, e1 at least e2 at least 2 and regular singular z=0]{ $\boldsymbol{z^{1/e_1}}$, $\boldsymbol{tz^{1/e_2}}$, $\boldsymbol{e_1\geq e_2\geq 2}$ and regular singular $\boldsymbol{z=0}$}\label{five}

 For $e_1>e_2\geq 2$, one has $\dim \mathcal{R}=3e_1+e_2-3$ and $\dim \mathcal{P}=e_1+e_2-1$. For the smallest case $e_1=3$, $e_2=2$,
 one has $\dim \mathcal{R}=8$, $\dim \mathcal{P}=4$, $\mathcal{R}\rightarrow \mathcal{P}$ is
(generically) surjective and the fibres have dimension 4. The formulas for the fibres are
complicated. This makes the computation of~$\mathcal{M}$ and the Lax pair nearly impossible.

For $e_1=e_2=m\geq 2$, $ n=2m$, one has $\dim \mathcal{R}=\frac{n(n-1)}{m}-1=2n-3$, $\dim \mathcal{P}=n-1$.
For the smallest case $m=2$, one has $\dim \mathcal{R}-\dim \mathcal{P}=2$. The computation in Section~\ref{5.1} below produces a
second-order Painlev\'e equation which is probably a pull back of the classical $P_4$ equation.

\subsection[The case sqrt(z), t*sqrt(z). The monodromy space R]{The case $\boldsymbol{z^{1/2}}$, $\boldsymbol{tz^{1/2}}$. The monodromy space $\boldsymbol{\mathcal{R}}$}\label{5.1}

 The eigenvectors are $q_1=z^{1/2}$, $q_2=-z^{1/2}$, $q_3=tz^{1/2}$, $q_4=-tz^{1/2}$ with a basis $f_1$, $f_2$, $f_3$, $f_4$
 of eigenvectors is such that $\gamma$ permutes the two pairs $\{f_1,f_2\}$ and $\{f_3,f_4\}$. There are 6 variables present in the Stokes matrices. Now $f_1$ and $f_3$ can be scaled independently and so $\dim \mathcal{R}=6-1=5$. Further
 $\mathcal{P}\cong \mathbb{C}^3$ is the space of the characteristic polynomials $T^4-p_3T^3+p_2T^2-p_1T+1$ of
 the elements of ${\rm SL}_4$.

For the case $t={\rm i}$, the singular directions in $[0,1)$ are $0$ for $q_2-q_1$, $\frac{1}{4}$ for $q_2-q_4$ and for $q_3-q_1$,
$\frac{1}{2}$ for $q_3-q_4$, $\frac{3}{4}$ for $q_1-q_4$ and for $q_3-q_2$.

 The topological monodromy $\operatorname{mon}$, which has the above characteristic polynomial, is equal to the
product
\begin{gather*}
\begin{pmatrix} &1 & & \\ 1& & & \\ & & &1 \\ & & 1& \end{pmatrix}
\begin{pmatrix} 1& & &x_{14} \\ & 1&x_{23} & \\ & &1 & \\ & & &1 \end{pmatrix}
\begin{pmatrix} 1& & & \\ & 1& & \\ & & 1& \\ & &x_{43} &1 \end{pmatrix}
 \begin{pmatrix} 1& & x_{13}& \\ & 1& & \\ & & 1& \\ & x_{42}& & 1 \end{pmatrix}
\begin{pmatrix} 1&x_{12} & & \\ & 1& & \\ & & 1& \\ & & & 1 \end{pmatrix}.
\end{gather*}
 The fibres of $\mathcal{R}\rightarrow \mathcal{P}$ have the following data.
Assume that $x_{42}\neq 0$ (we note that $x_{42}=0$ implies reducibility). After eliminating $x_{12}$, $x_{23}$, there remains one
cubic equation in the variables $x_{14}$, $x_{42}$, $x_{43}$, $x_{13}$, namely
$x_{14}x_{42}x_{43} - p_3x_{43} + x_{13}x_{42} + x_{43}^2 + p_2 + 2=0$.
After normalizing $x_{13}$ to $1$ (we note that $x_{13}=0$ produces a similar cubic equation), the cubic equation is almost identical
to the one for $P_4$ (see \cite{vdP-Sa}) which is
$x_1x_2x_3+x_1^2-\big(s_2^2 +s_1s_3\big)x_1-s_2^2x_2-s_2^2x_3+s_2^2+s_1s_2^3=0$.

{\it Observation}. The monodromy identity depends strongly on $t$. For instance, if $t\in \mathbb{R}_{>0}$, $t\neq 1$, then there is only one singular direction in $[0,1)$. The fibres of $\mathcal{R}\rightarrow \mathcal{P}$ are again rational surfaces.
 The dependence of these surfaces on $t$ is somewhat mysterious.

{\it The space of connections $\mathcal{M}$}.
 A differential module $M$ over $\mathbb{C}(z)$ in this moduli space can be considered as a differential module $N$ over $\mathbb{C}\bigl(z^{1/2}\bigr)$
 with an automorphism $\sigma$ satisfying $\sigma \circ z^{1/2}=\smash{-z^{1/2}}\circ \sigma$ and $\sigma ^2=1$. Now $N$ can be given a basis $e_1$, $e_2$, $e_3$, $e_4$ such that $\sigma$
 permutes the two pairs $\{e_1,e_2\}$ and $\{e_3,e_4\}$. The corresponding module $M$ over $\mathbb{C}(z)$ has the basis
 \[ B_1=e_1+e_2,\qquad B_2:=z^{1/2}(e_1-e_2),\qquad B_3=e_3+e_4,\qquad B_4=z^{1/2}(e_3-e_4).\]
 Let $D$ denote the operator of the form \smash{$z\frac{{\rm d}}{{\rm d}z}+(\textit{a matrix})$}
 acting upon $N$. This operator commutes with $\sigma$ and is determined by $De_1$, $De_3$. The formal part $\hat{D}$ of $D$ is given by
 \smash{$\hat{D}e_1=z^{1/2}e_1$}, \smash{$\hat{D}e_3=tz^{1/2}e_3$}. Using \cite[Section~12]{vdP-Si}, one concludes that $D$ is given by the formulas
 \[ De_1=z^{1/2}e_1+a_1e_2+a_2e_3+a_3e_4,\qquad De_3=tz^{1/2}e_3+a_4e_1+a_5e_2+a_6e_4 \]
 with constants $a_1,\dots ,a_6$. For the generic case, one can normalize to $a_5=1$. A computation of~$D$ on the basis
 $B_1,\dots , B_4$ produces the operator (normalized to trace equal to zero)
 $ z\frac{{\rm d}}{{\rm d}z}+A$ with
 \[A=\begin{pmatrix} a_1-1/4 & z& a_4+a_5&0 \\ 1& -a_1+1/4&0 & a_4-a_5\\ a_2+a_3& 0&a_6-1/4 & tz\\ 0&a_2-a_3 & t & -a_6+1/4 \end{pmatrix}.\]

 As mentioned above, one may normalize to $a_5=1$. Further the coefficients of the characteristic polynomial of residue matrix at $z=0$ are
 the parameters. We conclude that for fixed parameters, the above family of operators has dimension 2 (not counting the variable $t$). This is in
 agreement with the computation of the fibers of $\mathcal{R}\rightarrow \mathcal{P}$.

 The operator $z\frac{{\rm d}}{{\rm d}z}+A$ is extended to a Lax pair by an operator of the form $\frac{{\rm d}}{{\rm d}t}+B$ with $B=B_0+B_1z$ for matrices $B_0$, $B_1$ depending on $t$ only. The property $\frac{{\rm d}}{{\rm d}t}(A)=z\frac{{\rm d}}{{\rm d}z}(B)+[A,B]$ yields a vector field, represented by differential equations
 $\frac{{\rm d}}{{\rm d}t}a_j=R_j$, $ j=1,2,3,4,6$ with $R_j$ rational expressions in $a_1,\dots, a_4, a_6,t$. The characteristic polynomial of the residue matrix at $z=0$ is written as $T^4+P_2T^2+P_1T+P_0$. We have used the formulas for $P_1$, $P_2$ and another invariant $P_0=a_1+a_6$ to
 eliminate (stepwise) the functions $a_3$, $a_6$, $a_2$. For the remaining $a_1$, $a_4$ one obtains the equations
\begin{gather*}
\frac{{\rm d}a_1}{{\rm d}t}= \frac{\bigl(2P_1a_4^2 + \bigl(-4P_0^2 + (8a_1 + 2)P_0 - 8a_1^2 - 4P_2\bigr)a_4 + 2P_1\bigr)}{\bigl(a_4^2 - 1\bigr)\bigl(t^2 - 1\bigr)}, \\
 \frac{{\rm d}a_4}{{\rm d}t}= \frac{ \bigl(\bigl(-2a_4^2 + 2\bigr)a_1t^2 + 2\bigl(a_4^2 + 1\bigr)(P_0 - 2a_1)t + 2(a_4 - 1)(a_4 + 1)(-a_1 + P_0))} {\bigl(t^3 - t\bigr)}.
 \end{gather*}
The second equation can be used to write $a_1$ as an expression in $a_4$ and $\frac{{\rm d}a_4}{{\rm d}t}$. Substitution in the first equation
yields an explicit (and rather long) second-order differential equation for $a_4$. The poles with respect to $t$ are $0$, $1$, $-1$, $\infty$.
The cubic form of the fibres of $\mathcal{R}\rightarrow \mathcal{P}$ suggested a relation with~$P_4$.

 Identifying Painlev\'e equations is in general an almost impossible task; the present manuscript focuses on constructing Painlev\'e-type equations rather than identifying them. However, Dzha\-may~\cite{Dz} succeeded in using the
 geometry of Okamoto--Painlev\'e spaces and the algorithms of~\cite{D-F-L-S}, to identify the above equation with
 the standard Okamoto form $P_{\rm VI}(q(t), p(t), t; \kappa_0, \kappa_1, \kappa_8, \allowbreak\theta)$ of the sixth Painlev\'e
 equation, where
\begin{gather*}
a_4(s) = (s\cdot q(t) - 1)/(s \cdot q(t) + 1), \qquad a_1(s) = 1/2 (-2 p(t) + P_0 - \kappa_1 ),\qquad
t = 1/s^2,\\
 P_0 = 1/2 - \kappa_1,\qquad P_1 = \bigl(\kappa_8^2 - \kappa_0^2\bigr)/4,\qquad P_2 = \bigl(2P_0 - 2P_0^2 - \kappa_0^2 - \kappa_8^2\bigr)/4.
 \end{gather*}

\section[(z-cubed plus tz) with multiplicity m1, -m1*(z-cubed plus tz)/m2 with multiplicity m2 and regular z=0]{$\boldsymbol{\bigl(z^3+tz\bigr)_{m_1}}$, $\boldsymbol{\bigl(\frac{-m_1}{m_2}\bigl(z^3+tz\bigr)\bigr)_{m_2}}$ and regular $\boldsymbol{z=0}$}\label{six}

 {\it The case $m_1=m_2=1$}. This is in fact the standard family for $P_2$. One has $q_1=z^3+tz$, $q_2=-\bigl(z^3+tz\bigr)$; the singular directions for $q_1-q_2$ are $\frac{1,3,5}{6}$; the singular directions for
 $q_2-q_1$ are $\frac{0,2,4}{6}$. The monodromy identity reads
 \[\operatorname{mon}=\begin{pmatrix}g& 0\\ 0 &1/g \end{pmatrix} \begin{pmatrix}1& x_5\\0& 1\end{pmatrix} \begin{pmatrix}1& 0\\x_4& 1\end{pmatrix} \begin{pmatrix}1& x_3\\0& 1\end{pmatrix} \begin{pmatrix}1& 0\\x_2& 1 \end{pmatrix}\begin{pmatrix}1& x_1\\0& 1 \end{pmatrix} \begin{pmatrix}1& 0\\x_0& 1\end{pmatrix}.\]
 The equation $\operatorname{mon}=\bigl(\begin{smallmatrix}1& 0\\0& 1\end{smallmatrix}\bigr)$ and division by $\mathbb{G}_m$ (made explicit by normalizing $x_5=1$) produce an explicit
 $\mathcal{R}$ of dimension 3. Furthermore, $\dim \mathcal{P}=1$ (parameter $g$). See \cite[Section~3.9]{vdP-Sa} for more details.

{\it Case $m_1=2$, $m_2=1$, $z^3+tz$, $z^3+tz$, $-2z^3-2tz$ and $z=0$ is regular.}

{\it Description of $\mathcal{R}$}.
The Stokes matrices are described by 12 variables, there are 2 variables describing the formal monodromy. The topological monodromy is
supposed to be the identity. This produces 8 equations.

Actual computation, using the monodromy identity produces a space of dimension 6. Dividing by the action of
$\mathbb{G}_m^2$, due to scaling the basis vectors, produces $\dim \mathcal{R}=4$. Moreover, $\dim \mathcal{P}=2$ and the fibers of
$\mathcal{R}\rightarrow \mathcal{P}$ have dimension 2. One expects a relation with a classical Painlev\'e equation with at most two singularities.

We follow the discussion of the cases (i) and (ii) on pages~\pageref{(i)}--\pageref{(ii)} for the construction of the matrix differential operator $z\frac{{\rm d}}{{\rm d}z}+A$.
The formal operator
\[
{\rm ST}:=z\frac{{\rm d}}{{\rm d}z}+\operatorname{diag}\bigl(z^3+tz+a_1,z^3+tz+a_2,-2z^3-2tz-a_1-a_2\bigr)
\]
 is conjugated with
the matrix ${\bf 1}+M$, where
\[ M=\begin{pmatrix} 0&0 & b_1/z + b_2/z^2 + b_3/z^3\\
0 &0 & b_4/z + b_5/z^2 + b_6/z^3 \\
b_7/z + b_8/z^2 + b_9/z^3 &b_{10}/z + b_{11}/z^2 + b_{12}/z^3 &0 \end{pmatrix}.\]
This produces an operator
\[
z\frac{{\rm d}}{{\rm d}z}+\tilde{A} = \operatorname{Prin}\left( z\frac{{\rm d}}{{\rm d}z}+( {\bf 1}+M){\rm ST}({\bf 1}+M)^{-1}\right).
\]
Then $A$ is obtained from $\tilde{A}$
by adding the eight equations given by $\tilde{A}(0)=0$. After scaling the basis vectors one has~${b_2=1}$, $b_{11}=1$ and the matrix
$A$ depends only on the variables $b_1$, $b_5$, $b_7$,~$b_8$. The substitution~${b_5=B_5b_8}$ removes some denominators.
The characteristic polynomial of the formal monodromy at $z=\infty$ is written as $T^3+pT+q$.
The Lax pair equation
together with the equations $\frac{{\rm d}p}{{\rm d}t}=0$, $\frac{{\rm d}q}{{\rm d}t}=0$ produce a differential system
\begin{gather*} \frac{{\rm d}B_5}{{\rm d}t}=0, \frac{{\rm d}b_1}{{\rm d}t}= -6B_5b_1^3b_7 - 6b_1^3b_7 - 3b_1^2t - 3, \\
\frac{{\rm d}b_7}{{\rm d}t}=\bigl(27b_7^2(B_5 + 1)^2b_1^2 + 18tb_7(B_5 + 1)b_1 + 3t^2 - p\bigr)/(3B_5 + 3) .\end{gather*}
This is a Hamiltonian system
$\frac{{\rm d}b_1}{{\rm d}t}=\frac{\partial H}{\partial b_7}$, $\frac{{\rm d}b_7}{{\rm d}t}=-\frac{\partial H}{\partial b_1}$,
depending on parameters $B_5$, $p$, with
\[
H=-3(B_5 +1)b_7^2b_1^3 - 3tb_7b_1^2 + \frac{-3t^2 + p}{3B_5 + 3} b_1 - 3b_7.
\]

There is no evident relation between this Hamiltonian and Okamoto's list of polynomial
Hamiltonians in \cite{O1}. Possibly the method described in \cite{D-F-L-S} can be applied here.

 \section[z, tz, (-1-t)z, J.~Harnad's case ]{$\boldsymbol{z}$, $\boldsymbol{tz}$, $\boldsymbol{(-1-t)z}$, Harnad's case}\label{seven}
The moduli space $\mathcal{M}$ for these data is a family with irregular singularities and its isomonodromy produces
the Painlev\'e VI equation. Classically, $P_6$ is derived from isomonodromy with four regular singularities.
The construction and formulas for this new family are introduced by Harnad~\cite{H}. A more detailed investigation
 is given by Mazzocco \cite{Maz}. A formula for the Stokes data of this new family in terms of
 invariants for the classical family is computed in \cite{D-G}.

 In this section, we compute the Stokes data, the monodromy space $\mathcal{R}$, a matrix differential operator
 representing $\mathcal{M}$, its identification with the data in \cite{D-G,H, Maz} and the Lax pair.
 A direct identification of the resulting Painlev\'e type equations with $P_6$ seems difficult to find. However
 this identification is explicitly present in \cite{H, Maz}.

 \subsection[Computation of Stokes data and monodromy space R]{Computation of Stokes data and monodromy space $\boldsymbol{\mathcal{R}}$}

The formal solution space $V$ at $z=\infty$ has a basis $e_0$, $e_1$, $e_2$ such that the formal differential operator has the form
\[ z\frac{{\rm d}}{{\rm d}z}+\begin{pmatrix} z+a_0 & & \\ &tz +a_1& \\ & &(-1-t)z-a_0-a_1 \end{pmatrix}.\]
The formal monodromy and the Stokes matrices are given with respect to this natural basis.
The basis is unique up to multiplying each $e_j$ by a scalar. Since $z=0$ is regular singular, one has $\dim \mathcal{R}=6$.
 The formal monodromy at $z=\infty$ is the diagonal matrix $\operatorname{diag}(g_1,g_2,g_3)$
 with~${g_1g_2g_3=1}$ and $g_1={\rm e}^{2\pi {\rm i} a_0}$, $g_2={\rm e}^{2\pi {\rm i} a_1}$.

The singular directions depend on $t$. For $t$ close to ${\rm i}$, the singular direction $d_{kl} \in [0,1)$ for $ q_k-q_l$ are approximated
by 0.93, 0.83, 0.62, 0.43, 0.33, 0.12 for $d_{20}$, $d_{21}$, $ d_{01}$, $d_{02}$, $d_{12}$, $d_{10}$.
This determines the order of the six Stokes matrices in the monodromy identity, which states that the topological monodromy $\operatorname{mon}$ at
$z=0$ is, up to conjugation, equal to the product
\begin{gather*}
\begin{pmatrix} g_1& & \\ &g_2 & \\ & &\frac{1}{g_1g_2} \end{pmatrix}
\begin{pmatrix} 1& & \\ &1 & \\ x_{20}& &1 \end{pmatrix}
\begin{pmatrix} 1 & & \\ &1 & \\ &x_{21} & 1 \end{pmatrix}
\begin{pmatrix} 1& x_{01}& \\ &1 & \\ & &1 \end{pmatrix} \\
\qquad\times\begin{pmatrix} 1 & & x_{02}\\ &1 & \\ & & 1 \end{pmatrix}
\begin{pmatrix} 1& & \\ &1 & x_{12}\\ & & 1 \end{pmatrix}
\begin{pmatrix} 1& & \\ x_{10}&1 & \\ & & 1 \end{pmatrix}. \end{gather*}

 If $z=0$ is regular (i.e., $\operatorname{mon}={\bf 1}$), then the Stokes matrices and the formal monodromy are equal to the identity. This case is uninteresting. We suppose that $z=0$ is any regular singularity.

Since the basis $e_0$, $e_1$, $e_2$ is unique up to multiplying each $e_j$ by a constant, the monodromy space
$\mathcal{R}$ is the quotient of the space of tuples
$\{g_1,g_2,x_{20}, x_{21},x_{01}, x_{02}, x_{12},x_{10}\}$ by the action of~$\mathbb{G}_m^2$. A dense affine
subspace of $\mathcal{R}$ is obtained by normalization two of the $x_{**}$ to $1$, for instance,~${x_{20}=x_{12}=1}$.
It can be shown that $\mathcal{R}\rightarrow \mathcal{P}$ is surjective. The space $\mathcal{P}$ is given by the
tuples $(g_1,g_2, c_1,c_0)$ where $c_0$, $c_1$ are the nontrivial coefficients of the characteristic polynomial of $\operatorname{mon}$.
Now $\dim \mathcal{P}=4$ and a computation shows that the fibers of $\mathcal{R}\rightarrow \mathcal{P}$ are
affine cubic surfaces with equation $xyz+x^2+y^2+z^2+p_1x+p_2y+p_3z+p_4=0$ with $p_1,\dots ,p_4$ expressions in the
parameters $g_1$, $g_2$, $c_1$, $c_0$.

 This is the {\it expected} structure of the monodromy space if one admits the equivalence with the classical
 isomonodromy for $P_6$ (see the list in \cite[Section~2.2]{vdP-Sa}).

\subsection{Constructing the connection and the Lax pair}

 A Zariski open, dense subspace of the moduli space $\mathcal{M}$ is obtained from \cite[Theorem 12.4]{vdP-Si} (see also Section~\ref{section0}) by considering the family of differential operators of the form
 \[ z\frac{{\rm d}}{{\rm d}z}+\begin{pmatrix}a_0 &m_1 &m_2 \\m_3 &a_1 &m_4 \\ m_5 &m_6 &-a_0-a_1 \end{pmatrix}
 +z\begin{pmatrix}1 &0 &0 \\ 0& t& 0\\ 0& 0&-1-t \end{pmatrix}.\]

Changing the eigenvalues $1$, $t$, $-1-t$ of the irregular part of the operator into 0, 1, $t$ has no effect on the
monodromy and the Lax pair. One obtains in this way the \cite[formula (3.62)]{H} proposed by Harnad and the formula on \cite[p.\ 3]{Maz}.

The Zariski open subspace of $\mathcal{M}$ is obtained by taking equivalence classes of the above family.
Indeed, each of the basis vectors for this presentation can be multiplied by nonzero elements and therefore the family has to
 be divided by this action of $\mathbb{G}_m^2$. A Zariski open part of the quotient space is obtained by assuming
 $m_3m_4\neq 0$ and normalizing $m_3=m_4=1$. In this way, one obtains the following explicit description of an open part of
 $\mathcal{M}$ in terms of
\[ z\frac{{\rm d}}{{\rm d}z}+\begin{pmatrix} z+a_0&v_1 &v_2 \\ 1 & tz+a_1 &1 \\ v_3& v_4 & (-1-t)z-a_0-a_1\end{pmatrix}.\]
This operator, with $v_1,\dots ,v_4$ as functions of $t$ and $a_0$, $a_1$ constants,
 is completed to a Lax pair with the operator $\frac{{\rm d}}{{\rm d}t}+B_0(t)+zB_1(t)$. The assumption that the two operators
 commute leads to a set of differential equations for $v_1,\dots ,v_4$, namely
 \begin{gather*}
 v_1' = \frac{-3v_3v_1+3v_2v_4}{2t^2+5t+2}, \qquad
 v_2' = \frac{(6t+3)v_2^2-3v_3(t-1)v_2-3v_1(t+2)+(-9a_0t+9a_1)v_2}{2t^3+3t^2-3t-2}, \\
 v_3' = \frac{(3t-3)v_3^2+(-6t-3)v_2v_3+(9a_0t-9a_1)v_3+3v_4(t+2)}{2t^3+3t^2-3t-2},\qquad
 v_4' = \frac{3v_3 v_1-3v_2v_4}{t^2+t-2}. \end{gather*}
 In a monodromic family the topological monodromy is constant and then also the characteristic polynomial of the residue matrix
 is constant. This means that there are constants $\delta_0$, $\delta_1$, explicitly
 \begin{gather*} -a_0^2-a_0a_1-a_1^2-v_2v_3-v_1-v_4=\delta_1,\\
 a_0^2a_1+a_0a_1^2+a_1v_2v_3-a_0v_1+a_0v_4-a_1v_1-v_1v_3-v_2v_4=\delta_0. \end{gather*}
The algebra of functions on the parameter space $\mathcal{P}^+$ for the connection
is generated by $a_0$, $a_1$, $\delta_0$, $\delta_1$. They correspond to the
 4 parameters for the moduli space $\mathcal{R}$ of the analytic data.

We know by \cite{H,Maz}, that reduction of the above equations and parameters to the same for~$P_6$ is possible.
However, we have no explicit computation.

\begin{Remark} There are explicit formulas for reducible loci and the corresponding Riccati equations. These turn out to be
hypergeometric differential equations. This is also expected if one admits that the Painlev\'e type system is
equivalent to $P_6$.
\end{Remark}

\begin{Example}
 $v_3=v_4=0$ and $v_1'=0$, \[
 v_2'=\frac{3(2t+1)v_2^2-3(t+2)v_1+9(-a_0t+a_1)v_2}{(t-1)(2t+1)(t+2)}.
 \]
\end{Example}

\subsection[The hierarchy z-sub(m1), (tz)-sub(m2), ((-m1-tm2)z/m3(-sub(m3)]{The hierarchy $\boldsymbol{(z)_{m_1}}$, $\boldsymbol{(tz)_{m_2} (\frac{-m_1-tm_2}{m_3}z)_{m_3}}$}

The $m_1,m_2,m_3\geq 1$ stand for multiplicities. A computation shows that ``$z=0$ is regular'' leads to trivial Stokes matrices and
formal monodromy. Consider the case $m_1=2$, $m_2=m_3=1$. Then $\dim \mathcal{R} =10$ and $\dim \mathcal{P}=6$.
According to \cite[Exercise~12.5, p.~300]{vdP-Si}, the universal family is represented by the operator
\[z\frac{{\rm d}}{{\rm d}z} +
\begin{pmatrix} z+a_1 &0 & x_1&x_2 \\ 0 &z+a_2 & x_3&x_4 \\ x_5& x_6& tz+a_3&x_7 \\ x_8&x_9 &x_{10} & (-2-t)z-a_1-a_2-a_3\end{pmatrix} .\]
 Consider a normalization, say, $x_8=x_9=x_{10}=1$, obtained by restricting to the open subspace defined by $x_8x_9x_{10}\neq 0$
and multiplying the base vectors by scalars. The Lax pair consists of the above operator and $\frac{{\rm d}}{{\rm d}t}+B_0+zB_1$.
An easy computation produces a system of differential equations (or vector field)
 \smash{$\frac{{\rm d}x_j}{{\rm d}t}\in \mathbb{C}(a_1,\dots,a_4,x_1,\dots ,x_7)$}, $ j=1,\dots, 7$. Using three coefficients of the characteristic polynomial of the residue matrix at $z=0$, one can eliminate $x_2$, $x_4$, $x_7$ in a~rational way.
The resulting vector field of rank 4 is a Painlev\'e type system. It is unfortunately too complicated for presentation here.
This system contains, of course, many closed subsystems corresponding to $z$, $tz$, $(-1-t)z$, that produce
equations related to $P_6$.

\section[e-th root of z, tz, -tz for e at least 2 and regular singular z=0]{$\boldsymbol{z^{1/e}}$, $\boldsymbol{tz}$, $\boldsymbol{-tz}$, $\boldsymbol{e>1}$ and regular singular $\boldsymbol{z=0}$}\label{sec9}

Write $q_1,\dots , q_e$ for the conjugates of \smash{$z^{1/e}$}; write $r$ and $s$ for $tz$ and $-tz$. The value of the integer~$N$ (i.e., the dimension of the Stokes data) is equal to $ e(e-1)\frac{1}{e}$ (for the $q_k-q_l$) plus $e+e+e+e$ (for $q_k-r$, $r-q_k$, $q_k-s$, $s-q_k$) plus $2$ (for $r-s$, $s-r$) and sums up to~${5e+1}$. The formal monodromy~$\gamma$ depends on 1 parameter. Normalization by the action of $\mathbb{G}_m^2$
results in $\dim \mathcal{R}=5e$. The parameters are the $e+1$ coefficients of the characteristic polynomial of the monodromy at~${z=0}$ and the eigenvalues of $\gamma$. This leads to $\dim \mathcal{P}=e+2$ and $\dim \mathcal{R}-\dim \mathcal{P}=4e-2$.

{\it The case $e=2$}. For $t=1$, the singular directions in $[0,1)$ are
$d=1/2$ for $r-q_1$, $r-q_2$, $q_1-s$, $q_2-s$, $r-s$ and $q_2-q_1$, $q_1-r$, $q_2-r$, $s-q_1$, $s-q_2$, $s-r$ for $d=0$. This leads to a matrix formula for the topological monodromy
\[\operatorname{mon}_0=\begin{pmatrix} & -1& & \\ 1& & & \\ & & g& \\ & & & 1/g\end{pmatrix}
\begin{pmatrix} 1 & & x_1& \\ & 1& x_2& \\ & & 1& \\ x_3&x_4 & x_5& 1\end{pmatrix}
\begin{pmatrix} 1&x_6 & & x_7\\ & 1& &x_8 \\ x_9&x_{10} & 1&x_{11} \\ & & & 1\end{pmatrix}. \]
One may normalize to $x_{10}=x_{11}=1$. The parameters are $g$ and three coefficients of the characteristic polynomial of $\operatorname{mon}_0$.
A Maple computation verifies that the fibres of $\mathcal{R}\rightarrow \mathcal{P}$ are
 birational to $\mathbb{A}^6$.

The form of the differential operator $z\frac{{\rm d}}{{\rm d}z}+A$ is found, using the method explained in
Section~\ref{section0}, namely
\[ A= \begin{pmatrix} a&z & *&* \\ 1&-a & *&* \\ *&* &tz+b &* \\ *&* &* &-tz-b \end{pmatrix},\]
 where $a$, $b$ and $*$ are variables. Since two of the constants can be normalized to $0$, this is a family of dimension 10
 (over the field $\mathbb{C}(t)$).
 The invariants are three coefficients of the characteristic polynomial of the residue matrix at $z=0$ and $b$
 at $z=\infty$.

The Lax pair $\bigl\{ z\frac{{\rm d}}{{\rm d}z}+A,\frac{{\rm d}}{{\rm d}t}+B\bigr\}$ has the form $B=B_0+B_1z$, where $B_0$, $B_1$ are traceless matrices depending on $t$ only. The Lax pair calculations for the case $e=2$ produces a rational Painlev\'e vector field of dimension 6 which is too large to be presented here.

\section[z*z, -z*z-tz, tz and z=0 regular]{$\boldsymbol{z^2}$, $\boldsymbol{-z^2-tz}$, $\boldsymbol{tz}$ and $\boldsymbol{z=0}$ regular}\label{nine}

The assumptions: $z=0$ is regular and $z=\infty$ is unramified and has Katz invariant 2 produces the eigenvalues
\smash{$\bigl(z^2\bigr)_{m_1}$}, \smash{$\bigl(a_2z^2+a_1z\bigr)_{m_2}$}, \smash{$(b_1z)_{m_3}$} such that the $m_1$, $m_2$, $m_3$ satisfy
\[
m_1z^2+m_2\bigl(a_2z^2+a_1z\bigr)+m_3b_1z=0.
\]
 Then $\dim \mathcal{R}=(n-1)^2$, where $n=m_1+m_2+m_3\geq 3$.

 We present computations for the case
$m_1=m_2=m_3=1$. Then $\dim \mathcal{R}=4$ and $\dim \mathcal{P}=2$. The fibers of $\mathcal{R}\rightarrow \mathcal{P}$ are
affine cubic surfaces, which have, after an affine linear change of the variables, the equation $xyz+x+y+1=0$.
We note that the monodromy space of standard family which produces the first Painlev\'e equation $P_1$ is the affine cubic surface with the same equation (see \cite[Section~3.10]{vdP-Sa}). Despite this similarity, we did not find a relation between the case under consideration and $P_1$.

We {\it propose} a normalized differential operator
\[ \frac{{\rm d}}{{\rm d}z}+ \begin{pmatrix} z &a_1 &1 \\ 1&-z-t &a_2 \\ a_3& a_4& t \end{pmatrix}.\]
The reasoning for this proposal is the following. We observe that
\[z\frac{{\rm d}}{{\rm d}z}+ \begin{pmatrix} z^2+c_1 &0 &0 \\ 0&-z^2-tz+c_2 &0 \\ 0& 0& tz-c_1-c_2 \end{pmatrix}\] has at $z=\infty$ the universal deformation
\[z\frac{{\rm d}}{{\rm d}z}+ \begin{pmatrix} z^2+c_1 &* &* \\ *&-z^2-tz+c_2 &* \\ *& *& tz-c_1-c_2 \end{pmatrix}.\]
Here the $*$'s are arbitrary polynomials in $z$ of degree $\leq 1$. The assumption that $z=0$ is regular implies that
$c_1=c_2=0$ and the $*$ are elements of $\mathbb{C}z$. Finally, in the general case one can, by a change of the basis,
 arrive at two entries being $z$. Dividing by $z$ produces the above proposal.

For the Lax pair situation, $a_1$, $a_2$, $a_3$, $a_4$ are functions of $t$ and the above operator is supposed to commute with $\frac{{\rm d}}{{\rm d}t}+B(z,t)$, where
$B(z,t)$ has degree 1 in the variable $z$. The resulting differential equations are
\begin{gather*}
a_1' = -3a_1a_2a_3 + 3a_4, a_2' = -\frac{3}{2}a_1a_2^2 + 3a_2^2a_3 - \frac{9}{2}a_2t - 3/2,\\
 a_3' = \frac{3}{2}a_3a_1a_2 - \frac{3}{2}a_4, a_4' = \frac{3}{2}a_4a_1a_2 - 3a_4a_2a_3 + \frac{3}{2}a_1a_3 + \frac{9}{2}a_4t.\end{gather*}
The two parameters describing the parameter space
$\mathcal{P}$ are $p_1=a_1+2a_3$ and $p_2:=a_1+a_3+a_2a_4$ (thus $p_1'=0$, $p_2'=0$).
Elimination of $a_1$, $a_3$ leads to the system of differential equations
\begin{gather*}
\begin{split}
&a_2'=-\frac{3}{2}+6a_2^3a_4+\frac{9p_1-12p_2}{2}a_2^2-\frac{9a_2t}{2},\\
& a_4'=\frac{3(-p_1+2p_2)(-p_2+p_1)}{2} +\frac{9t}{2}a_4+(-9p_1+12p_2)a_2a_4-9a_2^2a_4^2.
\end{split}
\end{gather*}
 The first equation can be used to write $a_4$ as rational expression in $a_2$, $a_2'$. Substitution in the second equation yields
 an explicit second-order equation. The Hamiltonian $H$ is equal to
\[-3a_4^2a_2^3-\frac{9p_1-12p_2}{2}a_4a_2^2-\frac{3p_1^2-9p_1p_2+6p_2^2}{2}a_2+\frac{3}{2}a_4+\frac{9a_2a_4}{2}t,\]
 where $a_4'=\frac{\partial H}{\partial a_2}$, $a_2'=-\frac{\partial H}{\partial a_4}$ and $p_1$, $p_2$ are parameters (constants).

 We did not find a relation with a classical Painlev\'e equation, but
 Dzhamay \cite[Section~2]{Dz}, using the geometry of Okamoto--Painlev\'e spaces
 and the algorithms of \cite{D-F-L-S}, computed that the case at hand corresponds to a $P_4$ equation. More precisely, the above equation is equivalent to the standard Okamoto form $P_{\rm IV} (q(t), p(t), t; \kappa_0, \theta_8 ) $ of the fourth Painlev\'e equation,
 via
$a_2(s) = -{\rm i} / q(t)$,
$a_4(s) = - {\rm i} q(t) (q(t) p(t) - \theta_8)$,
$s = 2 {\rm i} t /3$, and $p_1 = - 2 \kappa_0$, $p_2 = \theta_8 - 2 \kappa_0$.

\section[1/z, -1/z at z=0 and tz, -tz at z=infty]{$\boldsymbol{1/z,-1/z}$ at $\boldsymbol{z=0}$ and $\boldsymbol{tz}$, $\boldsymbol{-tz}$ at $\boldsymbol{z=\infty }$}\label{sec11}

The data above is the $m=1$ case of the hierarchy $\mathcal{M}_m$ defined for $m\geq 1$ by the eigenvalues
 $\bigl(z^{-1}\bigr)_m$, $-mz^{-1}$ at $z=0$ and $(tz)_m$, $-mtz$ at $z=\infty$. One shows that $\dim \mathcal{R}_m=4m$ and
 $\dim \mathcal{P}=2m$. The case $m=1$ is the family for $P_3(D_6)$.

 For $m=2$, a computation reveals that $\mathcal{R}\rightarrow \mathcal{P}$ is surjective with fibres of dimension~4. A~computation of $\mathcal{M}$, indicated in Section~\ref{section0},
and a choice of normalizations give rise to an operator $z\frac{{\rm d}}{{\rm d}z}+A$ representing an open affine subset of $\mathcal{M}$, with
 \[ A=z^{-1} \begin{pmatrix} 1&0 &0 \\ 0&1 & 0\\ 0&0 &-2 \end{pmatrix}+
\begin{pmatrix}c_1& 0&-3m_{1} \\ 0&c_2 & -3\\ 3& 3m_{3}&-c_1-c_2 \end{pmatrix}+
 tz P \begin{pmatrix} 1&0 &0 \\ 0&1 & 0\\ 0&0 &-2 \end{pmatrix}P^{-1} ,\]
where
\[
P=\left(\begin{matrix}1 &0 &x_1 \\ 0&1 &x_2 \\ x_3&x_4 & 1\end{matrix}\right).
\]
A straightforward Lax pair computation produces formulas for $\frac{{\rm d}x_i}{{\rm d}t}$, $i=1,\dots,4$, as rational functions in
$x_1$, $x_2$, $x_3$, $x_4$, $t$. These are however too large to be displayed here.
A computation verifies that in an isomonodromic family the $c_1$, $c_2$, $m_{3}$, $m_{1}$ are constant.

\section[1/sqrt(z) at z=0 and tz, -tz at z=infty]{$\boldsymbol{z^{-1/2}}$ at $\boldsymbol{z=0}$ and $\boldsymbol{tz}$, $\boldsymbol{-tz}$ at $\boldsymbol{z=\infty }$}\label{sec12}
The data above defines the usual family for $P_3(D_7)$. By attaching multiplicities, e.g., \smash{$\bigl( z^{-1/2}\bigr)_m$}, $(tz)_m$, $(-tz)_m$ and $m\geq 1$,
one obtains a hierarchy. For the case $m=2$, the space $\mathcal{R}$ is given by the equation
$L\circ \operatorname{mon}_0=\operatorname{mon}_{\infty}\circ L$ with $ L\colon V(0)\rightarrow V(\infty)$ a linear bijection
where~${\operatorname{mon}_0\colon V(0)\rightarrow V(0)}$, $ \operatorname{mon}_{\infty}\colon V(\infty )\rightarrow V(\infty) $ are
the topological monodromies. The link~$L$ is considered up to multiplication by $\mathbb{C}^*$
and the matrices of $\operatorname{mon}_0$ and $\operatorname{mon}_{\infty}$ have the form\looseness=-1
\begin{gather*}
\operatorname{mon}_0= \begin{pmatrix} & & * & \\ & & &* \\ 1& & & \\ & 1& & \end{pmatrix}
\begin{pmatrix} 1& & * & *\\ & 1& *& *\\ & & 1& \\ & & &1 \end{pmatrix}, \\
\operatorname{mon}_{\infty}=\begin{pmatrix} *& & & \\ & *& & \\ & & *& \\ & & & *\end{pmatrix}
\begin{pmatrix} 1& &* &* \\ & 1& * & *\\ & & 1& \\ & & &1 \end{pmatrix}
\begin{pmatrix} 1& & & \\ & 1& & \\ *&* & 1& \\ *& *& &1 \end{pmatrix}.\end{gather*}
After normalization, one obtains $\dim \mathcal{R}=12$ and $\dim \mathcal{P}=4$. Therefore, the relative dimension
of $\mathcal{M}$ over $\mathbb{C}(t)$ is also 12. The construction of a differential operator for
 $\mathcal{M}$ and the Lax pair computations
seem to be out of reach.

\section[1/(n-th root z) and t*(n-th root z), a hierarchy related to P3(D8)]{$\boldsymbol{z^{-1/n}}$ and $\boldsymbol{tz^{1/n}}$, a hierarchy related to $\boldsymbol{P_3(D_8)}$}\label{twelve}

The assumption that $z=0$ and $z=\infty$ are both irregular singular and totally ramified leads, after normalization, to the Galois orbit
of $z^{-1/n}$ at $z=0$ and the Galois orbit $tz^{1/n}$ at $z=\infty$. In the sequel, we replace $t$ by $t^{1/n}$. The moduli spaces will
be denoted by $\mathcal{M}_n$ and $\mathcal{R}_n$. The standard isomonodromic family for $P_3(D_8)$ is derived from
$\mathcal{M}_2$. First we study the structure of~$\mathcal{R}_n$.

\subsection[The structure of the monodromy space R\_n]{The structure of the monodromy space $\boldsymbol{\mathcal{R}_n}$}
We refer to Section~\ref{section0} and \cite[Sections~8 and 9]{vdP-Si} for notation and results.
 For a connection $M\in \mathcal{M}_n$, the solution space $V(\infty)$ at $z=\infty$ has the
 structure:
 \smash{$V(\infty)=\bigoplus_{j=0}^{n-1} \mathbb{C}e_j$} with $\mathbb{C}e_j= V(\infty)_{q_j}$,
 \smash{$q_j=\sigma^j\bigl(t^{1/n}z^{1/n}\bigr)=\zeta_n^jt^{1/n}z^{1/n}$},
 where $\zeta_n={\rm e}^{2\pi {\rm i}/n}$. The basis $\{e_j\}$ is chosen such that the formal monodromy $\gamma_{V(\infty)}$
 acts by $e_0\mapsto e_1 \mapsto \cdots \mapsto e_{n-1}\mapsto (-1)^{n-1}e_0$.

 By Lemma~\ref{lemma01}, the space of the Stokes data at $z=\infty$ can be identified with $\mathbb{C}^{n-1}$.
 The monodromy identity for the topological monodromy $\operatorname{mon}_\infty$ at $z=\infty$ has been studied
in detail in \cite[pp.~146--147]{CM-vdP}. The {\it surprising property} is:

 {\it Let the Stokes data be $(x_1,\dots ,x_{n-1})\in \mathbb{C}^{n-1}$. Then the characteristic polynomial
 of the topological monodromy $\operatorname{mon}_{\infty}$ is} \[ T^n+x_{n-1}T^{n-1}+\dots +x_1T+(-1)^{n}.\]
 Thus the map from the Stokes data to the characteristic polynomial of $\operatorname{mon}_\infty$ is bijective.

The local solution space $V(0)$ at $z=0$ has a similar description. The map
from the space of the Stokes matrices to the (nontrivial) coefficients of the characteristic polynomial of the topological monodromy $\operatorname{mon}_0$ at $z=0$, is bijective.

The monodromy space $\mathcal{R}_n$ consists of the local analytic data at $z=\infty$ and $z=0$ together with
a link which glues the solution space above $\mathbb{P}^1\setminus \{\infty \}$ to the solution space
above $\mathbb{P}^1\setminus \{0\}$. More precisely, the link $L\colon V(0)\rightarrow V(\infty)$ is a linear bijection such that
$L\circ (\operatorname{mon}_0)^{-1}=\operatorname{mon}_\infty \circ L$. The ``inverse sign'' reflects the difference in directions of the paths for
$\operatorname{mon}_0$ and $\operatorname{mon}_\infty$.

 It follows that all the structure of $V(0)$ is determined by $\operatorname{mon}_\infty$ and the link. In particular, $\operatorname{mon}_0^{-1}$ has the same characteristic polynomial as $\operatorname{mon}_\infty$. Furthermore, the Stokes matrices at~${z=0}$ are the same as those at $z=\infty$, however taken in the opposite order.

Let $L_0$ be a fixed choice for the link. Any other link has the form $M\circ L_0$ where
$M=(m_{i,j})\in {\rm GL}(V(\infty))$ commutes with $\operatorname{mon}_\infty$. Then $\mathcal{R}_n$ can be identified with the tuples $(M,x_1,\dots ,x_{n-1})$ as above and $M$ taken modulo multiplication by a scalar (since the basis of $V(0)$ and $V(\infty)$ can be
 scaled).

A computation, using the formulas in \cite[Sections~3.3 and~3.4]{CM-vdP} for $\operatorname{mon}_\infty$, shows that $M$ is determined by its last
row $(m_{n,1},\dots ,m_{n,n})$ and that this space
can be identified with the open subspace of $\mathbb{P}^{n-1}\times \mathbb{A}^{n-1}$ consisting
of the tuples
\[((m_{n,1}:\dots :m_{n,n}),(x_1,\dots ,x_{n-1}))\in \mathbb{P}^{n-1}\times \mathbb{A}^{n-1}\]
 such that
the determinant $F$ of the matrix $M$ is not zero. One easily sees that $F$ is homogeneous of degree
$n$ in the $n$ variables $m_{n,1},\dots ,m_{n,n}$ and its coefficients are polynomials in $x_1,\dots ,x_{n-1}$.
In particular, {\it $\mathcal{R}_n$ is smooth, connected, quasi projective of dimension $2(n-1)$}.

\begin{Example}[example~$\mathcal{R}_2$] The local analytic data at $z=\infty$ are
\[
V(\infty)=V(\infty)_{\sqrt{tz}}\oplus V(\infty)_{-\sqrt{tz}}=\mathbb{C}e_0+\mathbb{C}e_1,\qquad
\gamma \colon\ e_0\mapsto e_1 \mapsto -e_0.
\]
The singular directions depend on $t^{1/2}$. For $t^{1/2}$ in a neighbourhood
of $1$, the monodromy identity is
\[\operatorname{mon}_\infty= \left(\begin{matrix}0 & -1\\ 1 & 0\end{matrix}\right)\left(\begin{matrix}1&0\\ x & 1\end{matrix}\right) =
\left(\begin{matrix}-x &-1\\ 1 & 0\end{matrix}\right).
\]
\end{Example}

As above, there is a surjective morphism $\mathcal{R}_2\rightarrow \mathbb{A}^1={\rm Spec}(\mathbb{C}[x])$. The fibres consist of the $(b_3:b_4)\in \mathbb{P}^1$ such that the determinant
 $F=-b_3b_4x + b_3^2 + b_4^2$ of the matrix $M:=\bigl(\begin{smallmatrix}b_1 &b_2\\ b_3& b_4\end{smallmatrix}\bigr)$,
 commuting with $\operatorname{mon}_\infty$, is nonzero. Thus \smash{$\mathcal{R}_2\subset \mathbb{P}^1\times \mathbb{A}^1$} is the complement of the quadratic curve $F=0$ over $\mathbb{A}^1$.

 We note that the description in \cite[Section~3.6]{vdP-Sa} of monodromy space for the classical case~$P_{3}(D_8)$ is slightly different.
 There the link $L$ is normalized by the assumption $\det L=1$.

\begin{Example}[example~$\mathcal{R}_3$] The local analytic data at $z=\infty$ are
$V(\infty)=\mathbb{C}e_0+\mathbb{C}e_1+\mathbb{C}e_2$
with $\mathbb{C}e_j=V(\infty)_{q_j}$ for $j=0,1,2$ and \smash{$q_0=t^{1/3}z^{1/3}$}, \smash{$q_1=\zeta_3 t^{1/3}z^{1/3}$}, \smash{$ q_2=\zeta_3^2t^{1/3}z^{1/3}$} and
\smash{$\zeta_3 ={\rm e}^{2\pi {\rm i} /3}$}. The basis vectors $e_0$, $e_1$, $e_2$ are chosen such that
 the formal monodromy $\gamma$ satisfies $e_0\mapsto e_1\mapsto e_2\mapsto e_0$.
 The basis $e_0$, $e_1$, $e_2$ is unique up to a~simultaneous multiplication by a~scalar.
\end{Example}
For $t^{1/3}$ equal to $1$, the topological monodromy $\operatorname{mon}_\infty$ at $z=\infty$
is $\operatorname{mon}_{\infty}=\gamma {\rm St}_{3/4} {\rm St}_{1/4}$ which is explicitly
\[\operatorname{mon}_\infty=\begin{pmatrix}0 &0 &1 \\ 1&0 &0 \\ 0&1 &0 \end{pmatrix}
\begin{pmatrix}1 &0 &0 \\ 0& 1&0 \\ 0& x_{21}&1 \end{pmatrix}
\begin{pmatrix}1&x_{01} &0 \\ 0&1 &0 \\ 0& 0&1 \end{pmatrix}=
\begin{pmatrix} 0&x_{21} &1 \\ 1&x_{01} &0 \\ 0& 1&0 \end{pmatrix}.\]
 The characteristic polynomial of $\operatorname{mon}_\infty$ is $X^3-x_{01}X^2-x_{21}X-1$.

The space $\mathbb{A}^2$ of the topological monodromies at $z=\infty$, consists of the pairs $(x_{01},x_{21})\in \mathbb{C}^2$. The fibres of the obvious map $\mathcal{R}_3\rightarrow \mathbb{A}^2$ consist of the elements
$(m_{3,1}:m_{3,2}:m_{3,3})\in \mathbb{P}^2$ such that the determinant $F$ of $M$ is invertible. Explicitly,
 \begin{align*}
 F={}&a_7^3x_1x_2 + a_7^2a_8x_1^2 + a_7^2a_9x_2^2 + a_7a_8a_9x_1x_2
-a_7^2a_8x_2 + a_7^2a_9x_1 \\ &- 2a_7a_8^2x_1 + 2a_7a_9^2x_2 - a_8^2a_9x_2
 + a_8a_9^2x_1 + a_7^3 - 3a_7a_8a_9 + a_8^3 + a_9^3,
 \end{align*}
where $(a_7,a_8,a_9)=(m_{3,1},m_{3,2},m_{3,3})$ and $x_1=x_{0,1}$, $ x_2=x_{2,1}$.
Thus $\mathcal{R}_3\subset \mathbb{P}^2\times \mathbb{A}^2$ is the complement of the cubic curve
over $\mathbb{A}^2$ with equation $F=0$.

\subsection[Construction of M\_n by using a cyclic covering]{Construction of $\boldsymbol{\mathcal{M}_n}$ by using a cyclic covering}

The space $\mathcal{M}_n$ will be represented by the ``universal'' matrix differential operator
 $L=z\frac{{\rm d}}{{\rm d}z}+A$ of size $n\times n$ over $\mathbb{C}(z)$. This operator has only at $z=0$ and $z=\infty$ singularities
and these are given by the Galois orbits of $z^{-1/n}$ at $z=0$ and $tz^{1/n}$ at $z=\infty$.
As in Section~\ref{two}, we use the $n$-cyclic covering of $\mathbb{P}^1$ to produce explicit formulas.

Now $L$ is seen as a map on a vector space $V$ of dimension $n$ over $\mathbb{C}(z)$. Let $\sigma$ denote
the automorphism of $\mathbb{C}\bigl(z^{1/n}\bigr)$ over $\mathbb{C}(z)$, given by $\sigma \bigl(z^{1/n}\bigr)=\omega z^{1/n}$ with $\omega={\rm e}^{2\pi {\rm i} /n}$.
Then $\sigma$ acts as semi-linear map on $W:=\mathbb{C}\bigl(z^{1/n}\bigr)\otimes V$ and $L$ extends uniquely to
a derivation $D$ on $W$. The operator $D$ has no ramification.
Define the trace $\operatorname{tr}\colon W\rightarrow V$ by $tr(w)=\sum _{j=0}^{n-1}\sigma^j(w)$.

{\it We apply the method of} \cite[Chapter~12]{vdP-Sa}, {\it to construct a universal family} (see also
Section~\ref{section0}).
 One considers a basis $e_0,e_1,\dots ,e_{n-1}$ of $W$ over $\mathbb{C}\bigl(z^{1/n}\bigr)$ such that $\sigma$ acts by
 $e_0\mapsto e_1 \mapsto \cdots \mapsto e_{n-1}\mapsto e_0$ and $D$ has, with respect to this basis, poles of order 1
 at~\smash{$z^{1/n}=0$} and at \smash{$z^{1/n}=\infty$} and no further singularities. By construction, $D$ commutes with
$\sigma$. In particular, $D(e_0)$ determine~$D$ and $D(e_0)$ has the form
\smash{$\sum _{j=0}^{n-1}\bigl(a_jz^{-1/n}+b_j+c_jz^{1/n}\bigr)e_j$} where the $a_j$, $b_j$, $c_j$ are variables, parametrizing the family.

Define the basis $\{B_0,B_1,\dots ,B_{n-1}\}$ of $V$ by \smash{$B_j=\operatorname{tr}\bigl(z^{j/n}e_0\bigr)$} for all $j$.
In the computations, we change $B_{n-1}$
into $z^{-1}B_{n-1}$. The given data for $D(e_0)$ induces a formula $z\frac{{\rm d}}{{\rm d}z}+A$ for $D$ on the basis $B_0,\dots ,B_{n-1}$.

 It is seen that the matrix $A$ has at most singularities at $z=0$ and $z=\infty$ (in fact poles of order at most 1). The characteristic polynomial of $A$ is seen to have the form
 \[
 T^n+p_{n-1}T^{n-1}+\cdots +p_1T+p_0-\bigl(\alpha z^{-1}+\beta z\bigr)
 \]
 with all entries $p_0,\dots ,p_{n-1},\alpha , \beta$ are in $\mathbb{C}[a_0,b_0,c_0,\dots ,b_{n-1},c_{n-1}]$.
 In particular, there are explicit expressions $\neq 0$ for $\alpha$ and $\beta$. In the family given by $A$, we {\it require} that $\alpha$ and $\beta$
 are invertible. Indeed, this follows from the assumption that $z=0$ and $z=\infty$ are totally ramified and have Katz invariant
 $\frac{1}{n}$ for the operator $z\frac{{\rm d}}{{\rm d}z}+A$.

 The resulting affine family of operators $z\frac{{\rm d}}{{\rm d}z}+A$ is parametrized by
 \[
 \textrm{Spec}\left(\mathbb{C}\left[a_0,b_0,\dots ,c_{n-1},\frac{1}{\alpha}, \frac{1}{\beta}\right]\right).
 \]
 Next, we make the following
restrictions and normalizations. We {\it require} that $A$ has trace zero. This is equivalent to giving
$b_0$ the value \smash{$\frac{3-n}{2n}$}. The variable $z$ is scaled such that $\alpha=1$ and we write $t$ for $\beta$.
The next step is to divide by the action, by conjugation, of the group of the (constant) diagonal matrices on the differential operator $z\frac{{\rm d}}{{\rm d}z}+A$. In examples this is done by replacing $(n-1)$ suitable entries of $A$ by $1$ (for example, resulting in $a_1=1$, $a_2=\cdots =a_{n-1}=0$). One sees that the dimension of the final family (not counting $t$) is $3n-1-2-(n-1)=2n-2$ (with $-1$ for $b_0$ and $-2$ for $\alpha$, $\beta$ and $-(n-1)$ for conjugation).
This is in accordance with $\dim \mathcal{R}_n=2(n-1)$.

We do not attempt to describe the full moduli space $\mathcal{M}_n$, but claim that the constructed family
$D=z\frac{{\rm d}}{{\rm d}z}+A$ describes an affine open subset. The operator $E:=\frac{{\rm d}}{{\rm d}t}+B$ such that $\{D,E\}$ forms
a Lax pair is also considered as $\sigma$-equivariant operator on $W$ and is determined by $E(e_0)$. One computes that
$E(e_0)=z^{1/n}\sum _{j=0}^{n-1}c_je_j$ holds, under the assumption that $a_0=1$, $a_1=\dots =a_{n-1}=0$.
Below, the above construction is made explicit for $n=3$, extended to $n=4$ and to general $n\geq 3$. For $n=2$, it is compared
to the classical formula.

\subsubsection[Case n=3]{Case $\boldsymbol{n=3}$}
The matrix of $D$ with respect to the basis $B_0$, $B_1$, $z^{-1}B_2$ is
\[\begin{pmatrix} b_0+b_1+b_2 & a_0+a_1+a_2 & c_0+c_1+c_2 \\
 c_0+c_1\omega ^2 +c_2\omega &\frac{1}{3}+b_0+b_1\omega^2+b_2\omega
 & z^{-1}(a_0+a_1\omega^2+a_2\omega ) \\
a_0+a_1\omega+a_2\omega^2 & z(c_0+c_1\omega+c_2\omega ^2)& -1/3+b_0+b_1\omega +b_2\omega^2
\end{pmatrix} \]
with $\alpha =(a_0+a_1+a_2)\bigl(a_0+a_1\omega +a_2\omega ^2\bigr)\bigl(a_0+a_1\omega^2+a_2\omega\bigr)$ and
 $\beta=(c_0+c_1+c_2)\bigl(c_0+c_1\omega +c_2\omega ^2\bigr)\bigl(c_0+c_1\omega^2+c_2\omega\bigr)$.

 {\it Normalization}: $(a_0+a_1+a_2)=\bigl(a_0+a_1\omega +a_2\omega ^2\bigr)=\bigl(a_0+a_1\omega^2+a_2\omega\bigr)=1$ (equivalently $a_0=1$, $a_1=a_2=0$), $b_0=0$ and $\beta=t$.
This produces
\[z\frac{{\rm d}}{{\rm d}z}+\begin{pmatrix}d_0 &1& f_0\\ f_1&d_1 & \frac{1}{z}\\ 1 &f_2z &d_2\end{pmatrix} \qquad\text{with}\quad f_0f_1f_2=t\quad\text{and}\quad d_0+d_1+d_2=0.\]
 It is completed to a Lax pair by
\[
t\frac{{\rm d}}{{\rm d}t}+\begin{pmatrix}0 &0& f_0\\ f_1&0 &0 \\ 0 & f_2z&0 \end{pmatrix}.
\]
The Painlev\'e type equations are
\begin{alignat*}{4}
& t\frac{f'_0}{f_0}= d_0-d_2 ,\qquad&&  t\frac{f'_1}{f_1}=d_1-d_0 ,\qquad && t\frac{f'_2}{f_2}=d_2-d_1+1 ,& \\
& td_0'=f_1-f_0,\qquad && td_1'=f_2-f_1,\qquad && td_2'=f_0-f_2.&
\end{alignat*}
 The system of equations has symmetries $\rho$ and $\sigma$ defined by
$\rho\colon f_0,f_1,f_2 \mapsto f_1,f_2,f_0$ and $d_0,d_1,d_2 \mapsto d_1-1/3, d_2+2/3, d_0-1/3$,
$\sigma\colon f_0,f_1,f_2\mapsto f_1,f_0,f_2$ and $d_0,d_1,d_2 \mapsto -d_0,-d_2,-d_1$,
and generating $D_3=S_3$.

The above formulas are almost identical to the ones of Kawakami \cite[p.\ 35]{K3}. In the latter the
trace of the operator is $+1$ instead of $0$ and the entries of the matrix are written in terms of canonical variables
$p_1$, $p_2$, $q_1$, $q_2$ for a certain Hamiltonian.

One substitutes $F=f_0$, $G=f_1$ and obtains the equivalent system \[F''=\frac{(F')^2}{F}-\frac{F'}{t} +\frac{FG-2F^2}{t^2}+\frac{1}{tG},\]
 \[G''=\frac{(G')^2}{G}-\frac{G'}{t} +\frac{FG-2G^2}{t^2}+\frac{1}{tF}.\]

 For a solution $(f_0,f_1,f_2,d_0,d_1,d_2)$, invariant under the symmetry $f_0\leftrightarrow f_1$,
 one has $F=G$ and resulting equation
 \[
 F''=\frac{(F')^2}{F}-\frac{F'}{t} +\frac{-F^2}{t^2}+\frac{1}{tF}.
 \]

 The substitution $t=x^4, F(t)=xf(x)$ produces a solution $f$ of $P_3(D_8)$, i.e., $P_3$
 with parameters $(\alpha, \beta, \gamma, \delta)=(-16,0,0,16)$.
 The invariant solutions under $D_3=S_3$ are $F=G=\zeta t^{1/3}$ with~${\zeta^3=1}$.

\subsubsection{The general case} We make the following normalization:
\[
D(e_0)=\bigl(z^{-1/n}+b_0+c_0z^{1/n}\bigr)e_0+\sum _{j=1}^{n-1}\bigl(b_j+c_jz^{1/n}\bigr)e_{n-1},\qquad
b_0= \frac{3-n}{2n}, \qquad \beta =t
\] and
$E(e_0)=z^{1/n}\sum _{j=0}^{n-1}c_je_j$ and basis $B_0,\dots ,B_{n-2},z^{-1}B_{n-1}$.

For general $n\geq 3$, the formulas for the Lax pair are
\begin{gather*} z\frac{{\rm d}}{{\rm d}z}+ \begin{pmatrix} d_0 & 1 & 0 & . &0 & f_0\\
 f_1 &d_1 &1 &. &0 & 0 \\
 0 &f_2 &d_2 & . & 0 &0 \\
 .& . & . & . & 1 & .\\
 0 &. & . & f_{n-2} &d_{n-2} & \frac{1}{z}\\
 1 &0 & . & 0 &f_{n-1}z &d_{n-1} \end{pmatrix} ,\qquad
 t\frac{{\rm d}}{{\rm d}t}+ \begin{pmatrix} 0 & 0 & 0 & . &0 & f_0\\
 f_1 &0 &0 &. &0 & 0 \\
 0 &f_2 &0 & . & 0 &0 \\
 .& . & . & . & 0 & .\\
 0 &. & . & f_{n-2} &0 & 0\\
 0 &0 & . & 0 &f_{n-1}z &0 \end{pmatrix} \end{gather*}
 with $\sum d_j=0$, $ \prod f_j=t$. {\it The Painlev\'e type equations are}
\begin{alignat*}{5}
& t\frac{f'_0}{f_0}=d_0-d_{n-1},\qquad && t\frac{f_1'}{f_1}=d_1-d_0,\qquad&&\dots ,\qquad&& t\frac{f_{n-1}'}{f_{n-1}}=d_{n-1}-d_{n-2} +1,&\\
&td_0'=f_1-f_0,\qquad &&td_1'=f_2-f_1,\qquad &&\dots ,\qquad &&td_{n-1}'=f_0-f_{n-1}.&
\end{alignat*}

 The symmetries observed for $n=3$ generalize to $n\geq 3$ as follows
\begin{align*}
\rho\colon\ (f_0, f_1,\dots, f_{n-1})
& \mapsto (f_1,f_2,\dots , f_{n-1},f_0),\\
(d_0,d_1,\dots, d_{n-1})& \mapsto \left(d_1-\frac1n, \dots,d_{n-2}-\frac1n,d_{n-1}+\frac{n-1}{n}, d_0-\frac1n\right)
\end{align*}
and
\begin{gather*}
\sigma\colon\ (f_0, f_1,\dots, f_{n-1})
\mapsto (f_{n-1}, f_{n-2},\dots, f_1, f_0),
\end{gather*}
combined with $\sigma(d_j)=-d_{\pi(j)}+c_j$ for a permutation $\pi$ satisfying $\pi^2=1$ and constants $c_j\in\bigl\{-\frac1n,\frac{n-1}{n}\bigr\}$
such that $\sum c_j=0$. These symmetries generate the dihedral group $D_n$ of order $2n$.

Taking $D_n$-invariants $f_0=\dots=f_{n-1}:=t^{1/n}$ and corresponding $d_j$'s produces algebraic solutions of the Painlev\'{e} type equations.

 {\it Case $n=4$}. For invariant solutions under
 $f_0\leftrightarrow f_1$, $f_2\leftrightarrow f_3$
 one has $(f_0,f_1,f_2,f_4)=\smash{\bigl(f,f,\frac{\sqrt{t}}{f}, \frac{\sqrt{t}}{f}\bigr)}$ and
 $d_0=-1/8$, $d_1=t\frac{f'}{f}-1/8$, $d_2=3/8$, $d_3=-t\frac{f'}{f}-1/8$ and the equation
 \[ f''=\frac{(f')^2}{f}-\frac{f'}{t}-\frac{f^2}{t^2}+\frac{1}{t^{3/2}}.\]
 Substitution $t=x^4$, $f(t)=xF(x)$ yields the equation $P_3(D_8)$ for $F$.

 The $D_4$-symmetric solutions are
 $f=f_0=f_1=f_2=f_3=\zeta t^{1/4}$ with $\zeta^4=1$.

 {\it Case $n=5$}. Consider solutions invariant under
 $f_0 \leftrightarrow f_4$, $f_1\leftrightarrow f_3$. Then
 $(f_0,f_1,f_2,f_3,f_4)=\bigl(f,g,\frac{t}{f^2g^2},g,f\bigr)$. Moreover,
 \[
 (d_0,d_1,d_2,d_3,d_4)=
\left(t\frac{f'}{f}-\frac25, t\frac{f'}{f}+t\frac{g'}{g}-\frac25, -t\frac{f'}{f}-t\frac{g'}{g}+\frac35, t\frac{f'}{f}+\frac35, \frac{-2}{5} \right).\]
The system of equations for $f$, $g$ reads
\[f''=\frac{(f')^2}{f}-\frac{f'}{t}+\frac{fg-f^2}{t^2},\qquad g''=\frac{(g')^2}{g}-\frac{g'}{t}+\frac{fg-2g^2}{t^2}+\frac{1}{f^2gt}. \]
The $D_5$-invariant solutions are $f=g=\zeta t^{1/5}$ with $\zeta^5=1$.

The presented examples for small $n$ suggest that
subgroups of $D_n$ produce interesting subsystems.

 {\it For completeness, we consider also the case $n=2$}.
The Lax pair is
\[
z\frac{{\rm d}}{{\rm d}z}+\begin{pmatrix}
d_0&\frac{1}{z}+f_0\\
1+f_1z&d_1\end{pmatrix},\qquad
t\frac{{\rm d}}{{\rm d}t}+\begin{pmatrix}0& f_0\\
f_1z & 0
\end{pmatrix}
\]
 with
$f_0f_1=t$ and $d_0+d_1=0$. The equations are
\[t\frac{f_0'}{f_0}=d_1-d_0,\qquad t\frac{f_1'}{f_1}=d_0-d_1+1, \qquad td_0'=f_1-f_0,\qquad td_1'=f_0-f_1.\]
One observes that $q=f_0$ satisfies the classical equation for $P_3(D_8)$, namely{\samepage
\[
q''=\frac{(q')^2}{q}-\frac{q'}{t}+\frac{2q^2}{t^2}-\frac{2}{t}.
\]
There is a symmetry $\rho$, given by $f_0,f_1\mapsto f_1,f_0$ and $d_0, d_1\mapsto d_1-1/2, d_0+1/2$.}

 The symmetry means that if $q$ is a solution, then so is $\frac{t}{q}$. Further the invariant element \smash{$r=q+\frac{t}{q}$} satisfies the equation
 \[ r''=\frac{r}{r^2-4t}(r')^2-\frac{r^2}{t\bigl(r^2-4t\bigr)}r'+\frac{2r^4-16tr^2+tr+32t^2}{t^2\bigl(r^2-4t\bigr)}. \]
 The symmetric solutions are $q=\pm \sqrt{t}$ and $r=\pm 2\sqrt{t}$.

\section[A companion of P\_1]{A companion of $\boldsymbol{P_1}$} \label{thirteen}

What we like to call the {\it companion of $P_1$} is the family $\mathcal{M}$ of connections, given by the set of differential modules $M$ over
$\mathbb{C}(z)$ with dimension 2, $\Lambda ^2M$ is trivial, $z=0$ is regular singular and the generalized eigenvalues at $z=\infty$ are $\pm w$ with
$w=z^{5/2}+\frac{t}{2}z^{1/2}$. This is the $P_1$ case except for allowing a regular singularity at $z=0$.

 {\it Description of $\mathcal{R}$}. The singular directions at $z=\infty$, lying in $[0,1)$
are $\frac{1}{5}$, $\frac{3}{5}$ for the difference of eigenvalues $w-(-w)$, and $0$, $\frac{2}{5}$, $\frac{4}{5}$ for
$(-w)-w$. Thus $\mathcal{R}\cong \mathbb{A}^5$. Let $\operatorname{mon}\colon \mathcal{R}\rightarrow {\rm SL}_2(\mathbb{C})$
 denote the morphism which sends the Stokes matrices to the monodromy matrix at $z=0$.

 The fibre of $\operatorname{mon}$ above $\bigl(\begin{smallmatrix}a& b\\c& d\end{smallmatrix}\bigr)\in {\rm SL}_2$ is given by the
 monodromy identity
 \[ \begin{pmatrix}0& -1\\1&0 \end{pmatrix}\begin{pmatrix} 1& 0\\x_5& 1 \end{pmatrix}\begin{pmatrix}1& x_4\\0& 1\end{pmatrix}\begin{pmatrix}1& 0\\x_3& 1 \end{pmatrix}
\begin{pmatrix}1& x_2\\0& 1 \end{pmatrix}\begin{pmatrix}1& 0\\x_1& 1\end{pmatrix}= \begin{pmatrix}a& b\\c& d \end{pmatrix}. \]
One eliminates $x_1$, $x_2$ by $x_1=-c(x_3+x_5+x_3x_4x_5)-a(1+x_3x_4)$ and $x_2=d(1+x_4x_5)+bx_4$.
Since $ad-bc=1$ we are left with the equation $ d(x_3+x_5+x_3x_4x_5)+b(1+x_3x_4)+1=0$.

For $d\neq 0$, the fibre is (as often) an affine cubic surface with three lines at infinity. Its equation coincides with the one for the
standard family which produces the classical $P_1$ (see \cite[Section~3.10]{vdP-Sa}).

For $d=0$, the equation of the fibre reads $x_3x_4=-b^{-1}-1$ and $x_5$ has no relations.
For $b\neq -1$ it is the surface $\mathbb{C}^*\times \mathbb{C}$. In particular, $\mathcal{R}\rightarrow {\rm SL}_2$ is surjective. Furthermore, the parameter space~$\mathcal{P}$ has dimension 1.

 {\it Description of $\mathcal{M}$}. Since the fibres of ${\rm RH}\colon\mathcal{M}\rightarrow \mathcal{R}$ are
parametrized by $t$, one has $\dim \mathcal{M}=6$. The isomorphic classes of the residue matrix at $z=0$ form the parameter space.
It can be shown that the family
 \[ z\frac{{\rm d}}{{\rm d}z}+ \begin{pmatrix}0 & 1\\0& 0\end{pmatrix} z^3+
\begin{pmatrix} 0 &b_2\\1&0\end{pmatrix}z^2+
\begin{pmatrix} a_1 &b_1\\ c_1& -a_1\end{pmatrix} z+ \begin{pmatrix}a_0 & b_0 \\c_0& -a_0\end{pmatrix},\]
with $t=b_1-b_2^2+c_0$, is the universal family of connections $\mathcal{M}$. We eliminate $b_1$ by $b_1=t+b_2^2-c_0$.
Furthermore, $p_0:=a_0^2+b_0c_0$ is the basic parameter (i.e., independent of $t$ in an isomonodromic family).

For the Lax pair computation, we suppose that the above operator commutes with
\[
\frac{{\rm d}}{{\rm d}t}+\begin{pmatrix} y_1&y_2\\y_3& -y_1 \end{pmatrix}+z\begin{pmatrix} y_4& y_5\\y_6& -y_4 \end{pmatrix}
\]
 and that \smash{$\frac{d(a_0^2+b_0c_0)}{dt}=0$}.
This eliminates $y_1,\dots ,y_6$ and produces the equations
$a_0'=2b_2c_0-b_0$, $ b_0'=-4a_0b_2$, $ a_1'=-3b_2^2+2c_0-t$, $ b_2'=-2a_1$, $ c_0'=2a_0$.

 For $c_0=0$, and also for a fixed residue matrix (i.e., $a_0'=b_0'=c_0'=0$), one obtains the $P_1$ equation.
For $c_0\neq 0$, one eliminates $b_0=\bigl(p_0-a_0^2\bigr)/c_0$ and
$b_0'=\bigl(-2a_0a_0'c_0-\bigl(p_0-a_0^2\bigr)c_0'\bigr)/c_0^2$. This results in the Painlev\'e type vector field
\[a_0'=2b_2c_0-\frac{p_0-a_0^2}{c_0},\qquad c_0'=2a_0,\qquad a_1'=-3b_2^2+2c_0-t,\qquad b_2'=-2a_1.\]
One eliminates $a_0$, $a_1$, and $c_0$ in the above equations by
$a_1=-\frac{1}{2}b_2'$, $c_0=\frac{t}{2}+\frac{3}{2}b_2^2-\frac{1}{4}b_2''$, \smash{$a_0=\frac{1}{4}+\frac{3}{2}b_2b_2'-\frac{1}{8}b_2^{(3)}$}. The remaining equation produces the following fourth-order explicit differential equation for $f:=b_2$
 \begin{gather*}
 -2\bigl(6f^2-f^{(2)}+2t\bigr)f^{(4)}=288f^5-240f^3f^{(2)}+192tf^3-24ff^{(1)}f^{(3)}+32f\bigl(f^{(2)}\bigr)^2\\
 \qquad-80tff^{(2)}+32ft^2+
 24\bigl(f^{(1)}\bigr)^2f^{(2)} -48 \bigl(f^{(1)}\bigr)^2+48ff^{(1)}+\bigl(f^{(3)}\bigr)^2-4f^{(3)}+64p_0+4
\end{gather*}
with $f=b_2$, $f^{ (j) }:=\left( \frac{{\rm d}}{{\rm d}t} \right)^j (b_2)$ for $j=1,2,3,4$.
We note that the denominator of the formula for $f^{(4)}$ is the equation for $P_1$. It seems probable, but we have no proof, that
the field $\mathbb{C}(t)\bigl(b_2,\frac{{\rm d}}{{\rm d}t}b_2,\bigl(\frac{{\rm d}}{{\rm d}t}\bigr)^2b_2,\bigl(\frac{{\rm d}}{{\rm d}t}\bigr)^3b_2\bigr)$ has, for generic $p_0$, transcendence degree 4 over
$\mathbb{C}(t)$. Indeed, this would fit with the observation that the fibres of
$\mathcal{R}\rightarrow \mathcal{P}$ have dimension 4.

 {\it Comments}. There are two reasons why this ``companion of $P_1$'' is not in the classical list
$P_1-P_6$. The Painlev\'e type equations describe in fact a vector field of rank 4
(written above as explicit differential equation of order 4).

Secondly, the monodromic family is a subfamily of the natural monodromic family with ``two time variables''
given by the data: $z=0$ is regular singular and $z=\infty$ is irregular singular with generalized eigenvalues
$\pm \bigl(z^{5/2}+\frac{t_1}{2}z^{3/2}+\frac{t_2}{2}z^{1/2}\bigr)$ with time variables $t_1$, $t_2$. We extend our computations to this case.

{\it Isomonodromy and Lax pairs for $q=\bigl(z^{5/2}+\frac{t_1}{2}z^{3/2}+\frac{t_2}{2}z^{1/2}\bigr)$.}
As in the case \smash{$q=z^{5/2}+\frac{t}{2}z^{1/2}$} the family of connections $z\frac{{\rm d}}{{\rm d}z}+A$ can be normalized to
\[ z\frac{{\rm d}}{{\rm d}z}+ \begin{pmatrix} 0 & 1 \\ 0& 0\end{pmatrix} z^3+
\begin{pmatrix} 0 & b_2 \\ 1& 0\end{pmatrix} z^2+
\begin{pmatrix} a_1 &b_1 \\ c_1& -a_1\end{pmatrix} z+ \begin{pmatrix} a_0 & b_0
 \\ c_0& -a_0\end{pmatrix},\]
where $c_0=b_2^2-b_2t_1+\frac{1}{4}t_1^2-b_1+t_2$ and $c_1=-b_2+t_1$.
The variables $a_0$, $a_1$, $b_0$, $b_1$, $b_2$ are seen as functions of $t_1$, $t_2$. The Lax pairs are expressed
by $\bigl[z\frac{{\rm d}}{{\rm d}z}+A, \frac{{\rm d}}{{\rm d}t_i}+B_i\bigr]=0$ for $i=1,2$ and~$B_i$ a~matrix depending on $t_1$, $t_2$, $z$ and polynomial in $z$
of degree $\leq 2$. One obtains in terms of closed one-forms $d(a_0),\dots ,d(b_2)$ the system
\begin{align*}
{\rm d}(a_0)={} &\bigl\{16b_2^4-16b_2^3t_1+4b_2t_1^3-t_1^4-48b_1b_2^2+32b_1b_2t_1-4b_1t_1^2\\
&+32b_2^2t_2-16b_2t_1t_2-16b_0b_2+
 8b_0t_1+32b_1^2-48b_1t_2+16t_2^2 \bigr\} \frac{{\rm d}t_1}{48} \\
 & +\left\{3b_2^2t_1 -2b_2^3 -\frac{3b_2t_1^2}{2}+\frac{t_1^3}{4}+2b_1b_2 -b_1t_1 -2b_2t_2+t_1t_2+b_0 \right\} {\rm d}t_2, \\
{\rm d}(a_1)={} &\bigl\{
 -16b_2^3+20b_2^2t_1-4b_2t_1^2-t_1^3+16b_1b_2-16b_1t_1-16b_2t_2\\ &+12t_1t_2+8b_0\bigr\}\frac{{\rm d}t_1}{24} +
 \left\{b_2^2-2b_2t_1+\frac{3}{4}\bigl(t_1^2\bigr)+2b_1-t_2 \right\} {\rm d}t_2, \\
{\rm d}(b_0)={} &\bigl\{a_0t_1^2-4a_0b_2^2+8a_0b_1-4a_0t_2-4a_1b_0\bigr\}\frac{{\rm d}t_1}{6}+\{ (4b_2-2t_1)a_0 \}{\rm d}t_2, \\
{\rm d}(b_1)={} &\bigl\{ -4a_1b_2^2+a_1t_1^2+4a_0b_2-2a_0t_1+4a_1b_1-4a_1t_2+2b_2-t_1\bigr\} \frac{{\rm d}t_1}{6}\\
 &+\{ 4a_1b_2-2a_1t_1+2a_0+1 \} {\rm d}t_2,\\
 {\rm d}(b_2)={} & \{ -a_1t_1+2a_0+2\}\frac{{\rm d}t_1}{3}+2a_1{\rm d}t_2.\end{align*}
Note that $p_0:=a_0^2+b_0c_0$ satisfies ${\rm d}(p_0)=0$ and $p_0$ is a generating parameter for this system.

\subsection*{Acknowledgements} We thank the referees for their work and suggestions, resulting in a considerable revision of
an earlier version of this text. We especially thank Anton Dzhamay for his successful effort to identify two of the Painlev\'e type
equations obtained in this paper.

\pdfbookmark[1]{References}{ref}
\LastPageEnding


\begin{thebibliography}{99}
\footnotesize\itemsep=0pt

\bibitem{Bo}
Boalch P., Symplectic manifolds and isomonodromic deformations,
 \href{https://doi.org/10.1006/aima.2001.1998}{\textit{Adv. Math.}}
 \textbf{163} (2001), 137--205,
 \href{http://arxiv.org/abs/2002.00052}{arXiv:2002.00052}.

\bibitem{C-D-G}
Cotti G., Dubrovin B.A., Guzzetti D., Helix structures in quantum cohomology of
 {F}ano varieties, \textit{Lecture Notes in Math.}, Vol.~2356,
 \href{https://doi.org/10.1007/978-3-031-69067-9}{Springer}, Cham, 2024,
 \href{http://arxiv.org/abs/1811.09235}{arXiv:1811.09235}.

\bibitem{CM-vdP}
Cruz~Morales J.A., van~der Put M., Stokes matrices for the quantum differential
 equations of some {F}ano varieties,
 \href{https://doi.org/10.1007/s40879-014-0012-6}{\textit{Eur.~J. Math.}}
 \textbf{1} (2015), 138--153,
 \href{http://arxiv.org/abs/1211.5266}{arXiv:1211.5266}.

\bibitem{D-G}
Degano G., Guzzetti D., The sixth {P}ainlev\'e equation as isomonodromy
 deformation of an irregular system: monodromy data, coalescing eigenvalues,
 locally holomorphic transcendents and {F}robenius manifolds,
 \href{https://doi.org/10.1088/1361-6544/acdc7a}{\textit{Nonlinearity}}
 \textbf{36} (2023), 4110--4168,
 \href{http://arxiv.org/abs/2108.07003}{arXiv:2108.07003}.

\bibitem{Dz}
Dzhamay A., {O}n some examples of identifying {P}ainlev\'e equations using the
 geometry of the {O}kamoto space of initial conditions,
 \href{http://arxiv.org/abs/2508.12052}{arXiv:2508.12052}.

\bibitem{D-F-L-S}
Dzhamay A., Filipuk G., Lig\c{e}za A., Stokes A., Different {H}amiltonians for
 differential {P}ainlev\'e equations and their identification using a
 geometric approach,
 \href{https://doi.org/10.1016/j.jde.2024.03.029}{\textit{J.~Differential
 Equations}} \textbf{399} (2024), 281--334,
 \href{http://arxiv.org/abs/2109.06428}{arXiv:2109.06428}.

\bibitem{F-I-K-N}
Fokas A.S., Its A.R., Kapaev A.A., Novokshenov V.Yu., Painlev\'e transcendents.
 The {R}iemann--{H}ilbert approach, \textit{Math. Surveys Monogr.}, Vol.~128,
 \href{https://doi.org/10.1090/surv/128}{American Mathematical Society}, Providence, RI, 2006.

\bibitem{G-G-I}
Galkin S., Golyshev V., Iritani H., Gamma classes and quantum cohomology of
 {F}ano manifolds: gamma conjectures,
 \href{https://doi.org/10.1215/00127094-3476593}{\textit{Duke Math.~J.}}
 \textbf{165} (2016), 2005--2077,
 \href{http://arxiv.org/abs/1404.6407}{arXiv:1404.6407}.

\bibitem{G2}
Guest M.A., From quantum cohomology to integrable systems, \textit{Oxf. Grad.
 Texts Math.}, Vol.~15,
 \href{https://doi.org/10.1093/oso/9780198565994.001.0001}{Oxford University
 Press}, Oxford, 2008.

\bibitem{G1}
Guest M.A., Differential equations aspects of quantum cohomology, in Geometric
 and {T}opological {M}ethods for {Q}uantum {F}ield {T}heory,
 \href{https://doi.org/10.1017/CBO9780511712135.003}{Cambridge University
 Press}, Cambridge, 2010, 54--85,
 \href{http://arxiv.org/abs/0906.0641}{arXiv:0906.0641}.

\bibitem{H}
Harnad J., Dual isomonodromic deformations and moment maps to loop algebras,
 \href{https://doi.org/10.1007/BF02112319}{\textit{Comm. Math. Phys.}}
 \textbf{166} (1994), 337--365,
 \href{http://arxiv.org/abs/hep-th/9301076}{arXiv:hep-th/9301076}.

\bibitem{H-K-N-S}
Hiroe K., Kawakami H., Nakamura A., Sakai H., 4-dimensional {P}ainlev\'e-type
 equations, \textit{MSJ Mem.}, Vol.~37, Mathematical Society of Japan, Tokyo,
 2018.

\bibitem{J-M-U}
Jimbo M., Miwa T., Ueno K., Monodromy preserving deformation of linear ordinary
 differential equations with rational coefficients.~{I}. {G}eneral theory and
 {$\tau $}-function,
 \href{https://doi.org/10.1016/0167-2789(81)90013-0}{\textit{Phys.~D}}
 \textbf{2} (1981), 306--352.

\bibitem{K3}
Kawakami H., Four-dimensional {P}ainlev\'e-type equations associated with
 ramified linear equations~{III}: {G}arnier systems and {F}uji--{S}uzuki
 systems, \href{https://doi.org/10.3842/SIGMA.2017.096}{\textit{SIGMA}}
 \textbf{13} (2017), 096, 50~pages,
 \href{http://arxiv.org/abs/1703.01379}{arXiv:1703.01379}.

\bibitem{K2}
Kawakami H., Four-dimensional {P}ainlev\'e-type equations associated with
 ramified linear equations~{II}: {S}asano systems,
 \href{https://doi.org/10.1093/integr/xyy013}{\textit{J.~Integrable Syst.}}
 \textbf{3} (2018), xyy013, 36~pages,
 \href{http://arxiv.org/abs/1609.05263}{arXiv:1609.05263}.

\bibitem{K4}
Kawakami H., Four-dimensional {P}ainlev\'e-type equations associated with
 ramified linear equations~{I}: {M}atrix {P}ainlev\'e systems,
 \href{https://doi.org/10.1619/fesi.63.97}{\textit{Funkcial. Ekvac.}}
 \textbf{63} (2020), 97--132,
 \href{http://arxiv.org/abs/1608.03927}{arXiv:1608.03927}.

\bibitem{K-N-S}
Kawakami H., Nakamura A., Sakai H., Degeneration scheme of 4-dimensional
 {P}ainlev\'e-type equations, in 4-dimensional {P}ainlev\'e-type equations,
 \textit{MSJ Mem.}, Vol.~37,
 \href{https://doi.org/10.2969/msjmemoirs/03701C020}{Mathematical Society of
 Japan}, Tokyo, 2018, 25--111,
 \href{http://arxiv.org/abs/1209.3836}{arXiv:1209.3836}.

\bibitem{Maz}
Mazzocco M., Painlev\'e sixth equation as isomonodromic deformations equation
 of an irregular system, in The {K}owalevski {P}roperty ({L}eeds, 2000),
 \textit{CRM Proc. Lecture Notes}, Vol.~32,
 \href{https://doi.org/10.1090/crmp/032/12}{American Mathematical Society},
 Providence, RI, 2002, 219--238.

\bibitem{Mi}
Miwa T., Painlev\'e property of monodromy preserving deformation equations and
 the analyticity of~{$\tau $} functions,
 \href{https://doi.org/10.2977/prims/1195185270}{\textit{Publ. Res. Inst.
 Math. Sci.}} \textbf{17} (1981), 703--721.

\bibitem{N-Y}
Noumi M., Yamada Y., Higher order {P}ainlev\'e equations of type~{$A^{(1)}_l$},
 \textit{Funkcial. Ekvac.} \textbf{41} (1998), 483--503,
 \href{http://arxiv.org/abs/math/9808003}{arXiv:math/9808003}.

\bibitem{O-O}
Ohyama Y., Okumura S., R.~{F}uchs' problem of the {P}ainlev\'e equations from
 the first to the fifth, in Algebraic and geometric aspects of integrable
 systems and random matrices, \textit{Contemp. Math.}, Vol. 593,
 \href{https://doi.org/10.1090/conm/593/11876}{American Mathematical Society}, Providence, RI, 2013, 163--178,
 \href{http://arxiv.org/abs/math/0512243}{arXiv:math/0512243}.

\bibitem{O1}
Okamoto K., Polynomial {H}amiltonians associated with {P}ainlev\'e
 equations.~{I}, \textit{Proc. Japan Acad. Ser.~A Math. Sci.} \textbf{56}
 (1980), 264--268.

\bibitem{vdP-Sa}
van~der Put M., Saito M.-H., Moduli spaces for linear differential equations and
 the {P}ainlev\'e equations,
 \href{https://doi.org/10.5802/aif.2502}{\textit{Ann. Inst. Fourier
 (Grenoble)}} \textbf{59} (2009), 2611--2667,
 \href{http://arxiv.org/abs/0902.1702}{arXiv:0902.1702}.

\bibitem{vdP-Si}
van~der Put M., Singer M.F., Galois theory of linear differential equations,
 \textit{Grundlehren Math. Wiss.}, Vol.~328,
 \href{https://doi.org/10.1007/978-3-642-55750-7}{Springer}, Berlin, 2003.

\bibitem{vdP-T}
van~der Put M., Top J., A {R}iemann--{H}ilbert approach to {P}ainlev\'e~{IV},
 \href{https://doi.org/10.1080/14029251.2013.862442}{\textit{J.~Nonlinear
 Math. Phys.}} \textbf{20} (2013), 165--177,
 \href{http://arxiv.org/abs/1207.4335}{arXiv:1207.4335}.

\bibitem{S-H-C}
Sen A., Hone A.N.W., Clarkson P.A., On the {L}ax pairs of the symmetric
 {P}ainlev\'e equations,
 \href{https://doi.org/10.1111/j.1467-9590.2006.00356.x}{\textit{Stud. Appl.
 Math.}} \textbf{117} (2006), 299--319.


\end{thebibliography}
\end{document}